\def\inte#1{
\displaystyle\mathop{#1\kern0pt}^\circ }
\let\pa=\partial
\let\p=\partial
\let\e=\varepsilon
\let\r=\rho
\let\s=\sigma
\let\f=\frac
\let\na=\nabla
\let\D=\Delta
\let\Om=\Omega
\def\bU{\mathbf{U}}
\def\bV{\mathbf{V}}
\def\cH{{\mathcal H}}
\def\cS{{\mathcal S}}
\def\cW{{\mathcal W}}
\def\cX{{\mathcal X}}
\def\grad{\nabla}
\def\la{\lambda}
\renewcommand{\div}{{\rm div}\,}
\newcommand{\loc}{{\rm loc}\,}
\newcommand{\Lip}{{\rm Lip}\,}
\newcommand\ds{\displaystyle}
\def\tp{p_1}
\def\dB{\dot{B}}
\def\virgp{\raise 2pt\hbox{,}}
\def\cdotpv{\raise 2pt\hbox{;}}
\def\eqdefa{\buildrel\hbox{\footnotesize def}\over =}
\def\C{\mathop{\mathbb C\kern 0pt}\nolimits}
\def\DD{\mathop{\mathbb D\kern 0pt}\nolimits}
\def\EE{\mathop{{\mathbb E \kern 0pt}}\nolimits}
\def\K{\mathop{\mathbb K\kern 0pt}\nolimits}
\def\N{\mathop{\mathbb N\kern 0pt}\nolimits}
\def\Q{\mathop{\mathbb Q\kern 0pt}\nolimits}
\def\R{\mathop{\mathbb R\kern 0pt}\nolimits}
\def\SS{\mathop{\mathbb S\kern 0pt}\nolimits}
\def\ZZ{\mathop{\mathbb Z\kern 0pt}\nolimits}
\def\TT{\mathop{\mathbb T\kern 0pt}\nolimits}
\def\PP{\mathop{\mathbb P\kern 0pt}\nolimits}
\newcommand{\Z}{{\ZZ}}
\def\dive{\mathop{\rm div}\nolimits}
\def\na{\nabla}
\def\d{\partial}
\newcommand{\beq}{\begin{equation}}
\newcommand{\eeq}{\end{equation}}
\newcommand{\ben}{\begin{eqnarray}}
\newcommand{\een}{\end{eqnarray}}
\newcommand{\beno}{\begin{eqnarray*}}
\newcommand{\eeno}{\end{eqnarray*}}
\newcommand{\andf}{\quad\hbox{and}\quad}
\newcommand{\with}{\quad\hbox{with}\quad}
\newtheorem{defi}{Definition}[section]
\newtheorem{thm}{Theorem}[section]
\newtheorem{lem}{Lemma}[section]
\newtheorem{rmk}{Remark}[section]
\newtheorem{prop}{Proposition}[section]
\begin{document}
\title[Global regularity of 3-D density patch]
{Global  regularity of three dimensional density patches for inhomogeneous
incompressible viscous flow}
 \author[X. Liao]{Xian Liao}
\address [X. Liao]%
{Mathematisches Institut, Universit\"at Bonn,
Endenicher Allee 60, 53115 Bonn, Germany.}
\email{xianliao@math.uni-bonn.de}
\author[Y. Liu]{Yanlin Liu}%
\address[Y. Liu]
 {Department of Mathematical Sciences, University of Science and Technology of China, Hefei 230026, China.}
\email{liuyanlin3.14@126.com}

\date{\today}
\maketitle

\begin{abstract}
Toward P.-L. Lions' open question  in \cite{Lions96} concerning the propagation of
regularity for density patch, we prove that the boundary regularity of the 3-D density patch
persists by time evolution for inhomogeneous incompressible viscous flow, with the initial density
given by $(1-\eta){\bf 1}_{\Om_0}+{\bf 1}_{\Om_0^c}$
for some small enough constant $\eta$ and some $W^{k+2,p}$ domain $\Om_0$, $p\in]3,\infty[$,
and with the initial velocity satisfying some smallness condition
and appropriate conormal regularities.
\end{abstract}

\noindent\keywords{\sl {Keywords:}} {Inhomogeneous incompressible Navier-Stokes equations; density patch;
striated distribution spaces}

\noindent {\sl AMS Subject Classification (2000):} 35Q30, 76D03  \

 \setcounter{equation}{0}
\section{Introduction}

We  consider the following inhomogeneous incompressible 3-D Navier-Stokes equations:
\begin{equation}\label{1.1}
 \left\{\begin{array}{l}
\displaystyle \pa_t\rho+\div(\rho v)=0,\qquad (t,x)\in\R^+\times\R^3,\\
\displaystyle\rho(\pa_t v+v\cdot\nabla v)-\Delta v+\nabla\pi=0,\\
\displaystyle \div v=0,\\
\displaystyle  (\rho, v)|_{t=0}=(\rho_0, v_0),
\end{array}\right.
\end{equation}
where $\rho\in\R^+,$ $v\in\R^3$ and $\pi\in\R$ stand for the density, velocity field
and pressure of the fluid respectively.
System \eqref{1.1} describes an incompressible fluid
with variable density.
Basic examples are mixture of incompressible and non reactant flows,
models of rivers, fluids containing a melted substance, etc.

The existence and uniqueness issues of the solutions of \eqref{1.1} have been studied extensively.
We just cite here among many others
\cite{AKM,Lions96, Simon}  for the construction of the weak solutions of \eqref{1.1}, and the works \cite{AP,AKM,CK,D1, LS}  for  the wellposedness results for the strong solutions.
Recently progresses have been made:
the smallness conditions on the density fluctuation was successfully removed in e.g. \cite{AGZ2, Danchin04'}, a small jump of the density function across some interface was permitted in e.g. \cite{DM12, HPZ}, and in the  energy framework, \cite{PZZ} considered the positive density which is only assumed to be bounded from up and below.

In \cite{Lions96} P.-L. Lions proposed the following density patch problem:
if the initial density  $\rho_0=\mathbf{1}_D$ for some smooth domain $D$, then whether or not the boundary regularity of $D$ will persist by time evolution?
There are many recent progresses toward this problem: Danchin-Mucha
\cite{DM17} propagated the $C^{1,\alpha}$ boundary regularity in the presence of vacuum. See \cite{Gancedo, DZ} for the  propagation results of the lower-order H\"older boundary  regularity. In  \cite{LZ, LZ2}, the first author and P. Zhang considered this  density patch problem with the boundary of any high-order regularity, in space dimension two, away from vacuum.
More precisely in \cite{LZ},   the initial density $\rho_0$ is taken of the following form
\begin{equation}\label{rho0}
{\rho_0}={(1-\eta)\bf 1}_{\Om_0}+ {\bf 1}_{\Om_0^c},
\quad 1-\eta\in\R^+,
\end{equation}
where $|\eta|$ is a sufficiently small constant, and $\Omega_0$ is some
simply connected $W^{k+2,p}(\R^2)$,  $k\geq 1,$ $p\in ]2,4[$ bounded domain in $\R^2$.
Let $X_0 \in W^{k+1,p}(\R^2)$
 be the divergence-free tangential vector field of $\d\Omega_0$
\footnote{Let $g_0\in W^{k+2,p}(\R^2)$ such that $\pa \Omega_0=g_0^{-1}(0)$ and $\nabla g_0$ does not vanish on $\pa \Omega_0$.
Then we can choose $X_0=\nabla^\bot g_0$.}
and denote by
 $\p_{X_0}f\eqdefa X_0\cdot\na f=\div(fX_0)$ the derivative of $f$ in the direction of $X_0$.
Then   they proved the following persistence result of the boundary regularity:
\begin{thm}\label{thm1.1}
{\sl
Given initial density $\rho_0$ of \eqref{rho0}
with $|\eta|$ sufficiently small, and initial velocity $v_0$ of \eqref{1.1}
satisfies the following conormal regularities for some $\epsilon\in ]0,1[$:
\begin{equation}\label{LZv0}
\begin{split}
v_0\in W^{1,p}(\R^2) \with \p_{X_0}^\ell v_0\in
 W^{1-\f{\ell}k\epsilon,p}(\R^2),
 \quad \ell=1,\cdots, k.
    \end{split}\end{equation}
Then the Cauchy problem \eqref{1.1}-\eqref{rho0}-\eqref{LZv0} has a unique global solution $(\r, v)$ such that
$$\r(t,x)={(1-\eta)\bf 1}_{\Om(t)}+{\bf
1}_{\Om(t)^c},\quad\mbox{with}\quad \Om(t)=\psi(t,\Omega_0),$$
where $\psi$ is the flow of $v$, and $\Om(t)$ remains a simply connected bounded $W^{k+2,p}(\R^2)$ domain.
}
\end{thm}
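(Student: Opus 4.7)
The plan is to adapt Chemin's striated-regularity paradigm to the inhomogeneous Navier--Stokes setting, as carried out in \cite{LZ}. The key object is the time-dependent tangential vector field $X(t,\cdot)$, defined by
\begin{equation*}
\p_t X + v\cdot\na X = X\cdot\na v,\qquad X|_{t=0}=X_0,
\end{equation*}
so that $X(t,\cdot)$ remains divergence free and tangent to $\p\Om(t)=\psi(t,\p\Om_0)$, where $\psi$ denotes the Lagrangian flow of $v$. The conormal derivative $\p_X f\eqdefa\div(fX)$ commutes with the material derivative, and the transported density satisfies $\p_X\r\equiv 0$ for all time; this identity is the structural reason why the whole scheme closes.

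First I would establish global existence and uniqueness of a strong solution of \eqref{1.1}--\eqref{rho0}--\eqref{LZv0} for sufficiently small $|\eta|$: rewriting the momentum equation as a perturbed Stokes system with right-hand side $\bigl(1-\tfrac1\r\bigr)(\D v-\na\pi)-v\cdot\na v$ and running a fixed-point argument using the $L^3\cap W^{1,p}$ smallness of $v_0$ delivers $v\in L^\infty_{\rm loc}(\R^+;W^{1,p})\cap L^1_{\rm loc}(\R^+;W^{2,p})$. Next, I would propagate the conormal regularity iteratively. For $\ell=1,\dots,k$, commuting $\p_X^\ell$ with \eqref{1.1} yields a Stokes-type system for $\p_X^\ell v$ in which $[\p_X,\p_t+v\cdot\na]$ vanishes in suitable variables, while the commutators $[\p_X,\tfrac1\r\D]$ and $[\p_X,\PP]$ (with $\PP$ the Leray projector) contribute lower-order terms absorbable by maximal regularity of the Stokes operator, crucially thanks to $\p_X\r=0$. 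Interpolation between $\p_X^{\ell-1}v$ and $\p_X^{\ell+1}v$ accounts for the fractional loss $\epsilon/k$ per step. Once $X(t,\cdot)\in W^{k+1,p}$ has been propagated, the Lagrangian flow $\psi(t,\cdot)$ is a $W^{k+2,p}$ diffeomorphism, and $\p\Om(t)=\psi(t,\p\Om_0)$ inherits $W^{k+2,p}$ regularity.

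The main obstacle is securing $\na v\in L^1_{\rm loc}(\R^+;L^\infty)$, which is needed both for a Lipschitz Lagrangian flow and for closing the transport estimate on $X$. Because $\r$ is discontinuous, the pressure $\pi$ inherits a jump across $\p\Om(t)$, so the plain $L^1_t(W^{2,p})$ estimate cannot yield $L^1_t(L^\infty)$ of $\na v$ without exploiting the tangential structure: schematically, $\na^2 v$ is controlled in $L^p$ only after applying $\p_X$, whereas the transverse direction is recovered through the Stokes equation, in which the jump of $\r$ is balanced by that of the normal derivative of $\pi$ along $\p\Om(t)$. Tracking this cancellation, together with the commutators above in an appropriate striated Besov scale, is the crux of the argument; this is also where the smallness of $\eta$ and the tangential regularity \eqref{LZv0} of $v_0$ are simultaneously essential.
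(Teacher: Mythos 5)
Your overall skeleton --- transporting the tangential field $X$ by $\p_tX+v\cdot\na X=X\cdot\na v$, using $\p_X\r\equiv0$ and the commutation of $\p_X$ with the material derivative, propagating $\p_X^\ell v$ through a perturbed Stokes/heat system, and reading off the $W^{k+2,p}$ regularity of $\p\Om(t)$ from the Lagrangian flow --- is exactly the scheme of \cite{LZ}, which is also the scheme of Section~\ref{sec3} here in the 3-D setting. But your diagnosis of the ``main obstacle'' is a misdiagnosis, and the mechanism you propose to resolve it is not the one the proof actually uses. Since $p>2$ in 2D (and $p>3$ in 3D), $W^{2,p}\hookrightarrow C^1$, so a bound $v\in L^1_{\rm loc}(W^{2,p})$ \emph{does} yield $\na v\in L^1_{\rm loc}(L^\infty)$ directly; no tangential structure, and no cancellation between the jump of $\r$ and the jump of the normal derivative of $\pi$ across $\p\Om(t)$, is needed for the Lipschitz control of $v$. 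The whole point of the smallness of $|\eta|$ is that $a=1/\r-1$ has small $L^\infty$ norm, so the terms $a\D v$ and $a\na\pi$ in the perturbed heat equation are absorbed into the left-hand side (cf.\ \eqref{3.7}--\eqref{3.9}), while the quadratic term $v\cdot\na v$ is handled by an exponential time weight; the Lipschitz bound then follows by the Gagliardo--Nirenberg interpolation \eqref{3.15}. The striated structure is needed only at the next stage, to propagate $\p_W^{\ell-1}X\in W^{2,p}$, i.e.\ the \emph{higher-order} regularity of the boundary, not the basic regularity of the velocity.

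Second, your proposal to account for the loss $\epsilon/k$ per conormal derivative by ``interpolation between $\p_X^{\ell-1}v$ and $\p_X^{\ell+1}v$'' cannot work as stated: the induction proceeds upward in $\ell$, so $\p_X^{\ell+1}v$ is not yet under control when you estimate $\p_X^{\ell}v$. What is actually done is to feed the datum $\p_{X_0}^\ell v_0\in W^{1-\frac{\ell}{k}\epsilon,p}$ into the heat semigroup (Lemma~\ref{lem3.1}, via the characterization of negative-index Besov norms), which converts the spatial loss into a loss of \emph{time} integrability: the exponents $r_\ell,s_\ell$ in \eqref{r} degrade as $\ell$ grows but remain admissible precisely because $\epsilon<1$. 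Without this device (or an equivalent one), and without the commutator estimates of Lemma~\ref{lem4.1} controlling the differences between $\p_W^{\alpha}\na$, $\p_W^{\alpha}\D$, $\p_W^{\alpha}\p_t$ and their reordered versions, the induction does not close; that is where the real work of the proof lies, and your sketch leaves it unaddressed.
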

The smallness condition on the jump $\eta$ was successfully removed in \cite{LZ2},
by using time weighted energy estimates.
There, the initial velocity $v_0$, together with its conormal derivatives $\d_{X_0}^\ell v_0$, is taken in the energy spaces.
Precisely, there exists some $s_0\in ]0,1[$ and some
$\epsilon_1\in ]0,s_0[$, such that
$$
v_0\in L^2\cap\dB^{s_0}_{2,1},
\quad
  \d_{X_0}^\ell v_0\in
L^2\cap\dB^{s_\ell}_{2,1}\quad\mbox{for}\quad  s_\ell\eqdefa s_0-\epsilon_1{ \ell}/{k},
\ \ell=1,\cdots,k.$$

The purpose of this paper is to extend Theorem \ref{thm1.1} to three dimensional case,
for the density patch problem with small jump \eqref{rho0} but not in the finite-energy framework.

One of the main tools we use in this paper is the so-called conormal (or striated) distribution spaces,
which have been successfully used by J.-Y. Chemin \cite{Chemin91, Chemin93} for studying the
evolution of the boundary regularity  of the 2-D vortex patch problem for Euler equations (see also \cite{BC93}).
Then the subsequent works \cite{D97} considered the viscous case
and the works \cite{GR,ZQ} extended the results to the 3-D case.

There are two obvious difficulties when considering 3-D case.
One comes from the fact that,
unlike 2-D case, the global well-posedness of 3-D Navier-Stokes equations
remains still unknown to us. In fact, it is one of the most challenging open problems in fluid mechanics.
So here we choose to
assume the following smallness condition on the initial velocity
\begin{equation}\label{HPZv0}
\begin{split}
 & \|v_0 \|_{\dB^{-1+\frac 3\tp}_{\tp,r}(\R^3)} \leq c_0,
 \hbox{ for some } 1<p_1<3
 \hbox{ and } 1<r<\min\{\frac{2\tp}{3(\tp-1)},\, \frac{4p}{2p-3},\,3 \},
    \end{split}
 \end{equation}
where $c_0$ is some sufficiently small constant,
and $\dB^{-1+\frac 3\tp}_{\tp,r}$ is
Besov space, see Definition \ref{defBesov} below.
Under this smallness condition on the initial data,  we have the following proposition
to guarantee the existence of  global weak solutions to \eqref{1.1}.

\begin{prop}[a particular case of Theorem $1.1$ in \cite{HPZ}]\label{thmexistence}
{\sl Let the initial data $(\rho_0, v_0)$ satisfy \eqref{rho0} and \eqref{HPZv0}.
 If $|\eta|, c_0$ are sufficiently small, then \eqref{1.1}  has a  global  weak solution $(\rho, v)$, and there exists a positive constant $C$  such that
\begin{equation}\label{vLr}\begin{split}
\|(\Delta v, \nabla\pi)\|_{L^r(\R^+;L^{\frac{3r}{3r-2}}(\R^3))}&+\|\nabla v\|_{L^{2r}(\R^+;L^{\frac{3r}{2r-1}}(\R^3))}
\leq C\|v_0\|_{\dB^{-1+\frac 3\tp}_{\tp,r}(\R^3)}.
\end{split}\end{equation}
}
\end{prop}

The other difficulty is that,
the boundary of a two dimensional domain is one dimensional curve, whose tangent space
can be spanned by the tangent vector (e.g. by $X_0$ given above).
But the boundary
of a three dimensional domain is a two dimensional surface, whose tangent space has dimension two,
hence in order to propagate the boundary regularity,
we need to discuss differentiations in different directions, which makes the problem more complicated.

To deal with this difficulty, we here follow \cite{GR} to  select ``good'' tangent vector directions to work with.
We first adopt the following definition of admissible systems introduced in \cite{GR}:

\begin{defi}[Definition 3.1 of \cite{GR}]\label{def1.1}
{\sl Any system $\cW=\{W_1,\cdots,W_N\}$ composed of N continuous vector fields is said to be \emph{admissible} if the function
$$[\cW]^{-1}\eqdefa
\Bigl( \frac{2}{N(N-1)}\sum_{\mu<\nu}|W_\mu\wedge W_\nu|^2 \Bigr)^{-\frac 14}$$
is bounded.
Here, for any two vector fields $X=(X^1, X^2, X^3)^T$, $Y=(Y^1, Y^2, Y^3)^T$, the wedge product $X\wedge Y$ is defined as
$X\wedge Y=\left(
X^2 Y^3-X^3 Y^2,
X^3Y^1-X^1 Y^3,
X^1Y^2-X^2 Y^1
\right)^T.$
}\end{defi}

Now we can select five \emph{divergence-free} tangential vector fields for the two dimensional boundary according to the following result,
which is Proposition 3.2 of \cite{GR}:
\footnote{Proposition 3.2 in \cite{GR} concerns the framework of H\"older spaces, however the proof also works in the framework of Sobolev spaces.}
\begin{prop}\label{prop1.1}
For any $W^{k+2,p}(\R^3)$
two dimensional compact submanifold $\Sigma$ of $\R^3$,
we can find an admissible system
consisting of five $W^{k+1,p}(\R^3)$, divergence-free vector fields tangent to $\Sigma$.
\end{prop}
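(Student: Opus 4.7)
The plan is to build the five vector fields as curls $W_\mu:=\nabla\wedge(gY_\mu)$, where $g\in W^{k+2,p}(\R^3)$ is a defining function for $\Sigma$ and $Y_1,\ldots,Y_5$ are auxiliary fields chosen so as to enforce admissibility. Since $\Sigma$ is a compact $W^{k+2,p}$ surface in $\R^3$, such a $g$ is produced, for instance, by suitably extending the signed distance to $\Sigma$; one then has $\Sigma=g^{-1}(0)$ and, by compactness, $|\nabla g|\geq c_0>0$ on some open tubular neighborhood $U\supset\Sigma$.

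The ansatz $W_\mu=\nabla\wedge(gY_\mu)=\nabla g\wedge Y_\mu+g(\nabla\wedge Y_\mu)$ has three desirable features built in. First, $\dive W_\mu=0$ automatically, because $W_\mu$ is a curl. Second, along $\Sigma$ (where $g=0$) one has $W_\mu|_\Sigma=\nabla g\wedge Y_\mu$, which is orthogonal to $\nabla g$ and therefore tangent to $\Sigma$. Third, choosing $Y_\mu\in W^{k+1,p}(\R^3)$ (in fact constant will suffice) gives $W_\mu\in W^{k+1,p}(\R^3)$, using that $g\in W^{k+2,p}$ and that $W^{k+1,p}(\R^3)$ is a Banach algebra for $p>3$.

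The selection of the $Y_\mu$ is dictated by the admissibility bound. Pick five constant vectors $Y_1,\ldots,Y_5\in\R^3$ whose ten pairwise wedges $\{Y_\mu\wedge Y_\nu\}_{\mu<\nu}$ span $\R^3$ in a quantitative sense: $\sum_{\mu<\nu}(a\cdot(Y_\mu\wedge Y_\nu))^2\geq c|a|^2$ for some $c>0$ and all $a\in\R^3$ (for example, the generators associated with a regular configuration in $\R^3$). The elementary identity $(\nabla g\wedge Y_\mu)\wedge(\nabla g\wedge Y_\nu)=(\nabla g\cdot(Y_\mu\wedge Y_\nu))\nabla g$ then yields, on $U$,
\begin{equation*}
\sum_{\mu<\nu}|W_\mu\wedge W_\nu|^2=|\nabla g|^2\sum_{\mu<\nu}\bigl(\nabla g\cdot(Y_\mu\wedge Y_\nu)\bigr)^2\geq c\,c_0^4>0,
\end{equation*}
which is precisely the admissibility lower bound on $U$.

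The main obstacle is extending admissibility globally to $\R^3$, because any bounded defining function has $\nabla g=0$ somewhere, forcing all the $W_\mu$ above to vanish there. Following \cite{GR}, this is resolved by a partition-of-unity patching: one replaces $g$ by $\chi g$ for a smooth cut-off $\chi$ equal to $1$ on a neighborhood of $\Sigma$ and supported in $U$ (which preserves the curl structure, the tangency on $\Sigma$, and the $W^{k+1,p}$ regularity) and then supplements with a fixed admissible divergence-free $W^{k+1,p}$ system supported away from $\Sigma$, which does not affect tangency along $\Sigma$. The only content beyond the H\"older argument of \cite{GR} is the Sobolev bookkeeping, which reduces to observing that multiplication, curl, and smooth truncation all preserve $W^{k+1,p}(\R^3)$ when $p\in]3,\infty[$.
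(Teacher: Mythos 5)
The paper does not actually prove this proposition: it is quoted verbatim from \cite{GR}, with only a footnote asserting that the H\"older-space argument there carries over to Sobolev spaces. Your local construction is essentially that of \cite{GR}, and it is correct: the curl ansatz $W_\mu=\nabla\wedge(gY_\mu)$ with constant $Y_\mu$, the identity $(\nabla g\wedge Y_\mu)\wedge(\nabla g\wedge Y_\nu)=\bigl(\nabla g\cdot(Y_\mu\wedge Y_\nu)\bigr)\nabla g$, and the resulting bound $\sum_{\mu<\nu}|W_\mu\wedge W_\nu|^2\geq c|\nabla g|^4\geq c\,c_0^4$ on a tubular neighborhood all check out, as do divergence-freeness, tangency, and the $W^{k+1,p}$ regularity (for constant $Y_\mu$ you do not even need the algebra property, only $\nabla(\chi g)\in W^{k+1,p}$).

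The gap is in your final patching paragraph. A system of $W^{k+1,p}(\R^3)$ vector fields with $p<\infty$ cannot be admissible uniformly on all of $\R^3$: since $(k+1)p>3$, such fields are continuous and tend to $0$ at infinity, so $\sum_{\mu<\nu}|W_\mu\wedge W_\nu|^2\to 0$ and $[W]^{-1}$ is unbounded. Hence the ``fixed admissible divergence-free $W^{k+1,p}$ system supported away from $\Sigma$'' that you invoke does not exist, and the global extension step fails as written. The way out is to observe that in the Sobolev setting --- and in the only way the paper uses admissibility, namely to guarantee $\inf_{\bV_0}|X_{1,0}\wedge X_{2,0}|>0$ on small patches of $\partial\Omega_0$ --- the boundedness of $[W]^{-1}$ need only hold on the compact surface $\Sigma$ (or a neighborhood of it), where your cutoff construction already provides the uniform lower bound; the supplementation away from $\Sigma$ is then superfluous, and indeed three constant vectors $Y_\mu=e_\mu$ already suffice there, the number five being an artifact of the global H\"older-space statement of \cite{GR}. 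You should either make this restricted reading of Definition~\ref{def1.1} explicit, or explain how you intend to obtain simultaneously exactly five fields, $W^{k+1,p}(\R^3)$ regularity, and a lower bound on all of $\R^3$ --- as written, that combination is unattainable.
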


Before we go to the statement of density patch problem in three space dimension, let us introduce the following notations which will be used freely in the following context.
For any system $\cW=\{W_1,\cdots,W_N\}$
composed of $N$ continuous vector fields,
and for any multi-index $\alpha=(\alpha_1,\cdots,\alpha_m)$ of length $m$ with $\alpha_1,\cdots,\alpha_m\in\{1,\cdots,N\}$, we denote
\begin{equation*}\label{1.3}
\pa_{\cW}^\alpha=\pa_{\cW}^{(\alpha_1,\cdots,\alpha_m)}
=\pa_{W_{\alpha_1}}\cdots\pa_{W_{\alpha_m}}.
\end{equation*}
We emphasize that the order of differentiation is important.
We furthermore denote
\begin{equation*}\label{1.4}
\pa_{\cW}^m=\bigl(\pa_{\cW}^\alpha\bigr)_\alpha
\end{equation*}
to be an $N^m$ dimensional vector, where $\alpha$ takes over all the  multi-index with length $m$ and with elements taking integer  values between $1$ and $N$.

For a Banach space $B$, we shall use the shorthand $\|u\|_{L^p_TB}$ for the norm $\bigl\|\|u(t,\cdot)\|_B\bigr\|_{L^p(0,T)}$, and we will simply write $v\in B$ for
  the vector-valued field $v\in B^3$.
The notation $C$ stands for some real positive constant which may be different in each occurrence.

Now we can state   the density patch problem in $\R^3$.
Let  us take  the initial density $\rho_0$  of the form \eqref{rho0} where
 $\Omega_0$ is a simply connected bounded domain of $\R^3$,
such that its boundary $\d\Omega_0$ is $W^{k+2,p}(\R^3),$
  $k\geq 1,$ $p\in ]3,\infty[,$ two dimensional compact submanifold in $\R^3$.
By Proposition \ref{prop1.1}, we can find an admissible system
as follows
\begin{equation}\label{W0}
\begin{split}
&\cX_0=\{X_{1,0},X_{2,0},X_{3,0},X_{4,0},X_{5,0}\},
\quad\hbox{with $X_{i,0}$ tangent to $\pa\Omega_0$, and}\\
& X_{i,0}\in W^{k+1,p}(\R^3),
\,k\geq 1, \,p\in ]3,\infty[,
\quad\div X_{i,0}=0,
\quad i=1,\cdots,5.
\end{split}
\end{equation}
We assume the following (conormal) regularities on $v_0$ for some $\varepsilon\in ]0,1[$ \begin{equation}\label{v0}
\begin{split}
&v_0\in W^{1,p}(\R^3) \cap L^3(\R^3),
\quad\p_{\cX_0}^\ell v_0\in
 W^{1-\f{\ell}k\varepsilon,p}(\R^3),\quad \ell=1,\cdots, k.
    \end{split}
 \end{equation}

In the following, we consider the propagation
of the boundary regularity for the 3-D density patch problem.
The main result of this paper states as follows:
\begin{thm}\label{thmmain}
{\sl Let the initial data $(\rho_0, v_0)$ satisfy \eqref{rho0}-\eqref{HPZv0}-\eqref{W0}-\eqref{v0}, with $|\eta|, c_0$ sufficiently small.
 Then \eqref{1.1} has a unique global
solution $(\r, v)$ such that
$$\r(t,x)={(1-\eta)\bf 1}_{\Om(t)}+{\bf
1}_{\Om(t)^c},\quad\mbox{with}\quad \Om(t)=\psi(t,\Omega_0),$$
where $\psi$ is the flow of $v$, and $\Om(t)$ remains a simply connected bounded $W^{k+2,p}(\R^3)$ domain.
}\end{thm}

\begin{rmk}
{\sl Results in the same spirit as Theorem \ref{thm1.1} and Theorem \ref{thmmain},
have been obtained recently by R. Danchin and X. Zhang, see \cite{DZ}.
Their strategy is to assume that $\rho$ belongs to some multiplier space,
which allows them to propagate lower-order H\"older regularity.
}\end{rmk}

Recall in  \cite{LZ} that in the two dimensional case, if the velocity of the flow $v\in L^1_\loc(\R^+, W^{2,p})$, then  the tangential vector field $X$  is transported by the flow from $X_0$ as follows
$$
\d_t X+v\cdot\nabla X=X\cdot\nabla v,
\quad X|_{t=0}=X_0,
$$
and  to prove the persistence of  $W^{k+2,p}(\R^2)$ regularity of the domain $\Omega_0$ is equivalent  to show that the tangential vector field $X$     satisfies the following (conormal) regularities:
$$
X,\,\d_X X, \cdots,\d_X^{k-1}X\in W^{2,p}(\R^2),
\ \mbox{where}\ \div X=0
\hbox{ and } \d_X f\eqdefa X\cdot\nabla f=\div(fX).
$$

Here we consider the system $\cX(t)=\{X_1(t),\cdots,X_5(t)\}$ satisfying
for any $1\leq i\leq 5$,
 \beq\label{W}
 \left\{\begin{array}{l}
\displaystyle \pa_t X_i+v\cdot\nabla X_i=X_i\cdot\na v,\\
\displaystyle X_i(0,x)=X_{i,0}(x),
\end{array}\right. \eeq
where the initial system $\cX_0=\{X_{0,1},\cdots, X_{0,5}\}$ is given by \eqref{W0},
and we aim to control $\|(\d_{\cX}^\ell X_i)(t,\cdot)\|_{W^{2,p}}$.
The above choice of $\cX$ ensures that the directional derivatives  $\d_{X_i}$ and the material derivative
 $(\d_t+v\cdot\nabla)$ commute with each other, and hence by view of \eqref{1.1}, $\d_{X_i}^\ell \rho$ satisfy the free transport equations
$$
(\d_t+v\cdot\nabla) (\d_{X_i}^\ell\rho)=0, \quad \forall\ell=1,\cdots,k,
\quad \forall i=1,\cdots,5,
$$
with the initial condition $\d_{X_{i,0}}^\ell \rho_0\equiv 0$,
which is guaranteed by the choice of $\rho_0$ and $\cX_0$, see \eqref{rho0} and \eqref{W0}.
Therefore we can derive that
\begin{equation}\label{bound:a}
\d_{\cX}^\ell \rho\equiv0,
\quad \forall \ell=1,\cdots,k,
\end{equation}
 which means that the tangential regularity for $\rho$  propagates
for all  the times.
On the other side, $X_i(t)$ should remain divergence-free all the time
since $\div X_i$ satisfies the free transport equation by \eqref{W}:
\begin{equation*}
 \left\{\begin{array}{l}
\displaystyle \pa_t (\div X_i)+v\cdot\nabla (\div X_i)=0,\\
\displaystyle \div X_i|_{t=0}=\div X_{i,0}=0.
\end{array}\right.
\end{equation*}

Moreover, we have the following proposition:
\begin{prop}\label{propapriori}
{\sl Under the assumptions of  Theorem \ref{thmmain}, the system \eqref{1.1} and \eqref{W}
has a unique global solution $(v,\rho,\cX)$ satisfying for any $i\in\{1,\cdots,5\},~\ell\in \{1,\cdots, k\}$ that
\begin{equation}\label{claim}
v\in L^1_\loc(\R^+, W^{2,p}(\R^3)),
\quad\pa_{\cX}^{\ell-1}X_i\in L^\infty_\loc(\R^+; W^{2,p}(\R^3)),\quad
\dive X_i=0.
\end{equation}
}\end{prop}

The proof of this proposition will be postponed to the next section.
In the following, we shall present the proof of Theorem \ref{thmmain}
by assuming Proposition \ref{propapriori}.

\begin{proof}[Proof of Theorem \ref{thmmain}]
For the unique global solution $(v,\rho)$ of \eqref{1.1}
given by Proposition \ref{propapriori},
let $\psi(t,x)$ be the flow associated with the velocity field $v$, that is,
\begin{equation}\label{psi}
 \left\{\begin{array}{l}
\displaystyle \pa_t\psi(t,x)=v(t,\psi(t,x)),\\
\displaystyle \psi(0,x))=x,
\end{array}\right.
\end{equation}
then by view of \eqref{claim},   we have
\begin{equation*}
\psi(t,\cdot)-Id\in L^\infty_{\loc}(\R^+;W^{2,p}),
\end{equation*}
and hence $\Omega(t)\eqdefa\psi(t,\Omega_0)$ is a $W^{2,p}(\R^3)$ domain. Moreover,
the first equation of \eqref{1.1} gives
\begin{equation}\label{rho}
\rho(t,x)={(1-\eta)\bf 1}_{\Om(t)}+{\bf 1}_{\Om(t)^c},
\quad 1-\eta\in \R^+.
\end{equation}

On the other side, due to the Finite Covering Theorem,
there exists a finite number of charts   $\{\bV^\beta\}_{\beta=1}^m$ covering  the two dimensional $W^{k+2,p}(\R^3)$ compact submanifold $\pa\Omega_0$
such that we can parametrize anyone of them, say $\bV^1$,  as follows
 \begin{equation*}\label{1.6}
\phi_1: \bU^1\rightarrow   \bV^1,
\quad\hbox{via}\quad (r,s)\mapsto\phi_1(r,s),
\quad \phi_1\in W^{k+2,p}(\bU^1),
 \end{equation*}
 where $\bU^1$ is an open set on $\R^2$ and $\bV^1$ is $\d\Omega_0$-open set in $\R^3$.
In order to show that $\pa\Omega(t)=\psi(t,\pa\Omega_0)\in W^{k+2,p}$,  $k\geq 1$,
it suffices to show (without loss of generality)
\begin{equation*}
\pa_r^{k_1}\pa_s^{k_2}\psi(t,\phi_1(r,s))\in L^\infty_\loc(W^{2,p}(\bU^1)),\ \forall k_1+k_2=k.
\end{equation*}
Hence we only need to verify that
\begin{equation}\label{YZ}
(\pa_{Y_0}^{k_1}\pa_{Z_0}^{k_2}\psi)(t,\cdot)\in L^\infty_\loc(W^{2,p}(\bV^1)),\ \forall k_1+k_2=k,
\end{equation}
where the tangent vector fields $Y_0, Z_0\in W^{k+1,p}(\bV^1;\R^3)$ are defined by
 $$Y_0(\phi_1(r,s))=\pa_r\phi_1(r,s),
 \quad
 Z_0(\phi_1(r,s))=\pa_s\phi_1(r,s).
 $$
Indeed,   a direct calculation gives for any $(r,s)\in \bU^1$,
\begin{align*}
&\pa_r\psi(t,\phi_1(r,s))
=Y_0(\phi_1(r,s))\frac{\pa\psi(t,\phi_1(r,s))}{\pa x}
=(\pa_{Y_0}\psi)(t,\phi_1(r,s)),
\\
& \pa_s\psi(t,\phi_1(r,s))
=Z_0(\phi_1(r,s))\frac{\pa\psi(t,\phi_1(r,s))}{\pa x}
=(\pa_{Z_0}\psi)(t,\phi_1(r,s)),
\end{align*}
and hence by an induction argument we achieve
\begin{equation*}\label{5.6}
\pa_r^{k_1}\pa_s^{k_2}\psi(t,\phi_1(r,s))
=(\pa_{Y_0}^{k_1}\pa_{Z_0}^{k_2}\psi)(t,\phi_1(r,s)),
\hbox{ with }\phi_1\in W^{k+2,p}(\bU^1).
\end{equation*}

Noting that the initial system $\cX_0$ given in \eqref{W0} is admissible,
so for any $(\widetilde r, \widetilde s)\in\bU^1$,
without loss of generality, we can assume
\begin{equation}\label{5.8}
|X_{1,0}\wedge X_{2,0}|(\phi_1(\widetilde r, \widetilde s))
\geq \min_{(\widetilde r, \widetilde s)\in \bU^1}\bigl([\cX_0](\phi_1(\widetilde r, \widetilde s))\bigr)^2>0.
\end{equation}
Then the fact that $X_{i,0}$ is continuous since $W^{k+1,p}(\R^3)\hookrightarrow C(\R^3)$,
guarantees the existence of
a $\d\Omega_0$-open set $\bV_0\subset \bV^1$ containing $\phi_1(\tilde{r},\tilde{s})$ such that
\begin{equation}\label{5.9}
\inf_{x\in\bV_0 }|X_{1,0}\wedge X_{2,0}|(x) >0.
\end{equation}
Thus we can decompose $Y_0$, $Z_0$ as a linear combination of $X_{1,0}$ and $X_{2,0}$ on $\bV_0$, namely
\begin{equation*}\label{5.10}
Y_0=c_1 X_{1,0}+c_2 X_{2,0},\quad Z_0=d_1 X_{1,0}+d_2 X_{2,0},
\end{equation*}
where the coefficients are defined by
\begin{equation*}\label{5.11}\begin{split}
&c_i=\frac{(Y_0,X_{i,0})|X_{j,0}|^2-(Y_0,X_{j,0})(X_{1,0},X_{2,0})}{|X_{1,0}\wedge X_{2,0}|^2},\\
&d_i=\frac{(Z_0,X_{i,0})|X_{j,0}|^2-(Z_0,X_{j,0})(X_{1,0},X_{2,0})}{|X_{1,0}\wedge X_{2,0}|^2},
\quad i,j=1,2,\, i\neq j.
\end{split}\end{equation*}
Hence without loss of generality,  to prove \eqref{YZ}  reduces to prove
\begin{equation}\label{5.14}
(\pa_{c_1 X_{1,0}+c_2 X_{2,0}}^{k_1}\pa_{d_1 X_{1,0}+d_2 X_{2,0}}^{k_2}\psi)(t,\cdot)\in L^\infty_\loc(W^{2,p}(\bV_0)),\ \forall k_1+k_2=k.
\end{equation}
A new problem needed to consider is that the differential may act on the coefficients $c_1,c_2,d_1,d_2$.
But thanks to the fact $X_{1,0}, X_{2,0}, Y_0, Z_0\in W^{k+1,p}(\bV^1)$, $k\geq 1$, $p>3$,
and \eqref{5.9},
we know that the coefficients $c_i, d_i$, $i=1,2$ belong to $W^{k+1,p}(\bV_0)$, $k\geq 1$.
Hence it suffices to show
\begin{equation}\label{5.15}
\pa_{(X_{1,0},X_{2,0})}^{k}\psi\in L^\infty_\loc(W^{2,p}(\bV_0)).
\end{equation}
To do this, let us recall the definition \eqref{psi} of the stream function $\psi$,
thus the vector field $X_i(t,x)$ defined by \eqref{W} can be written as
\begin{equation}\label{5.17}
X_i(t,x)=(\pa_{X_{i,0}}\psi)(t,\psi^{-1}(t,x)),
\quad   i=1,2.
\end{equation}
Hence for any function $f(t,x)\equiv g(t,\psi^{-1}(t,x))$, there holds
\begin{equation*}\label{5.18}\begin{split}
(\pa_{X_i}&f)(t,x)=(\pa_{X_{i,0}}\psi)(t,\psi^{-1}(t,x))
\cdot\nabla\psi^{-1}(t,x)\cdot(\nabla g )(t,\psi^{-1}(t,x))\\
&=X_{i,0}(t,\psi^{-1}(t,x))\cdot(\nabla\psi)(t,\psi^{-1}(t,x))
\cdot\nabla\psi^{-1}(t,x)\cdot(\nabla g )(t,\psi^{-1}(t,x))\\
&=X_{i,0}(t,\psi^{-1}(t,x))\cdot(\nabla g )(t,\psi^{-1}(t,x))\\
&=(\pa_{X_{i,0}} g)(t,\psi^{-1}(t,x)),
\quad i=1,2.
\end{split}\end{equation*}
Applying the above formula repeatedly  on \eqref{5.17} yields that
\begin{equation*}\label{5.19}\begin{split}
(\pa_{(X_1,X_2)}^{\alpha(k-1)}X_{\alpha_k})(t,x)
= (\pa_{(X_{1,0},X_{2,0})}^{\alpha(k)}\psi)\,(t,\psi^{-1}(t,x)),
\end{split}\end{equation*}
for any multi-index $\alpha(k)=(\alpha_1,\cdots,\alpha_k)$ with $\alpha_1,\cdots,\alpha_k\in\{1,2\}$,
and $\alpha(k-1)$ denotes $(\alpha_1,\cdots,\alpha_{k-1})$.
Hence in order to prove \eqref{5.15}, we only need to show
\begin{equation}\label{5.16}
\pa_{(X_{1},X_{2})}^{k-1}(X_{1},X_{2})\in L^\infty_\loc\bigl(W^{2,p}(\mathbf{V}(t))\bigr),
\quad \mathbf{V}(t)\eqdefa\psi(t,\bV_0),
\end{equation}
which is guaranteed by \eqref{claim}.
This completes the proof of this theorem.
\end{proof}

At the end of this section, we recall some basic facts on
Littlewood-Paley theory for readers' convenience.
For $u\in{\mathcal S}'(\R^d),$
where ${\mathcal S}'$ stands for tempered distribution space, we set
$$
\dot\Delta_j u=\varphi(2^{-j}D)u\quad\mbox{and} \quad \dot{S}_j u=\chi(2^{-j}D)u,\quad \forall j\in\Z,
$$
where $\chi,\varphi$ are smooth
functions  supported in the ball $\mathcal{B}\eqdefa \{ \xi\in\R^d,
|\xi|\leq\frac{4}{3}\}$ and the ring $\mathcal{C}\eqdefa \{
\xi\in\R^d,\frac{3}{4}\leq|\xi|\leq\frac{8}{3}\}$ respectively, such that
\begin{equation*}
 \sum_{j\in\Z}\varphi(2^{-j}\xi)=1 \quad\hbox{for}\quad \xi\neq 0,\andf \chi(\xi)+\sum_{j\geq 0}
 \varphi(2^{-j}\xi)=1 \quad\hbox{for}\quad \xi\in\R^d.
\end{equation*}
Then we can define the homogeneous Besov space $\dot B^s_{p,r}$ with $s<\frac{d}{p}$ as follows.
\begin{defi}\label{defBesov}
{\sl  Let $(p,r)\in[1,+\infty]^2,~s<\frac{d}{p}$  and $u\in{\mathcal
S}_h'(\R^d),$ which means that $u$ is in~$\cS'(\R^d)$ and
satisfies~$\ds\lim_{j\to-\infty}\|\dot S_ju\|_{L^\infty}=0$. We define
$$\dot B^s_{p,r}(\R^d)\eqdefa \big\{u\in{\mathcal S}_h'(\R^d)\;\big|\; \|u\|_{\dot B^s_{p,r}}<\infty\big\},
\quad\mbox{where}\quad
\|u\|_{\dot B^s_{p,r}}\eqdefa \Big\| \Bigl(2^{js}\|\dot\Delta_j
u\|_{L^{p}}\Bigr)_{j\in\Z}\Big\|_{\ell ^{r}}.
$$
}\end{defi}

\setcounter{equation}{0}
\section{The proof of Proposition \ref{propapriori}}

The purpose of this section is to present the proof of Proposition \ref{propapriori}.
For any smooth enough solution $(v,\rho,\cX)$ of \eqref{1.1} and \eqref{W}, we consider
\begin{equation}\label{J0}
J_0(t)\eqdefa 1+\|(\d_t v,\Delta v,\nabla\pi)\|_{L^{r_0}_t(L^p)}
+\|\nabla v\|_{L^{\sigma_1}_t(L^\infty)\cap L^{s_0}_t(L^p)}
 +\|v\|_{L^\infty_t(L^3)\cap L^{\sigma_2}_t(L^\infty)} ,
\end{equation}
and the following inductively defined quantities $J_\ell(t)$,  $ \ell=1,\cdots,k,$
\begin{equation}\label{Jell}
\begin{split}
J_{\ell}(t)\eqdefa
&J_{\ell-1}(t)
 + \|(\d_t \d_{\cX}^{\ell } v,\Delta\d_{\cX}^{\ell} v, \nabla\d_{\cX}^{\ell
}\pi)\|_{L^{r_{\ell }}_t(L^p)}
+\|\nabla\d_{\cX}^{\ell } v\|_{L^{r_{\ell }}_t(L^\infty) \cap
L^{s_{\ell }}_t(L^p)} \\
&+\|\d_{\cX}^{\ell } v\|_{L^{s_{\ell }}_t(L^\infty)\cap L^\infty_t(L^p)}
+ \|\d_t \d_{\cX}^{\ell-1}{\cX}\|_{L^{s_{\ell }}_t(W^{1,p})} +
\|\d_{\cX}^{\ell-1} {\cX}\|_{L^\infty_t(W^{2,p})},
\end{split} \end{equation}
where $\cX$ is defined by \eqref{W}, and $r_0,~s_0,~\sigma_1,~\sigma_2,~r_\ell,~s_\ell$ can be taken freely as long as
\begin{equation}\begin{split}\label{r}
&r_0\in]1,2[,\quad s_0\in]2,\infty[,\quad
\sigma_1\in
\bigl]\f{2p}{p+3}, \frac {2p}{3}\bigr[,
\quad\sigma_2\in\bigl]\frac{4p-6}{p}, \infty[,\quad\mbox{and}\\
&\qquad\qquad\quad r_\ell \in
\bigl]1,\f{2k}{k+\ell\e}\bigr[,
\quad s_\ell\in
\bigl]2,\f{2k}{\ell\e}\bigr[,
\quad \ell=1,\cdots,k.
\end{split}\end{equation}
Then the main ingredient of proving Proposition \ref{propapriori} is the following {\it A priori} estimates:
\begin{equation}\label{bound}
J_0(t)\leq C_0,\quad
J_\ell(t)\leq
\cH_{\ell+1}(t)\eqdefa C_0\underbrace{\exp\cdots\exp}_{\ell+1\hbox{ times }}
(C_0 t),
\quad
\forall \ell=1,\cdots,k,
\quad \forall t\in\R^+.
\end{equation}
Here and in all that follows, $C_0$ denotes some positive constant
which depends only on the initial data and may vary from lines to lines in the following context.
We remark that \eqref{bound} is not the explicit bound, the huge number of exponentials is only a technical artefact.

We shall prove \eqref{bound} in the following Subsections \ref{sub2.1},~\ref{sub2.2} and \ref{sub2.3}
for the case $\ell=0,~\ell=1$ and $\ell\geq 2$ respectively. And Subsection \ref{sub2.4}
is devoted to the proof of Proposition \ref{propapriori}.


\subsection{The proof of \eqref{bound} for $\ell=0$.}\label{sub2.1}

\ Let us first state the following {\it a priori} estimate.

 \begin{lem}\label{prop2.1}
{\sl Assume  the same hypothesis as in Theorem \ref{thmmain}.
Then for the global weak solution given in Proposition \ref{thmexistence}, there exists a positive constant $C_0$ such that
\beq\label{vL3}
\|v(t)\|_{L^3(\R^3)}^3+\|\nabla |v|^{\f32}\|_{L^2_t(L^2(\R^3))}^2\leq
C_0(\|v_0\|_{L^3(\R^3)}^3+1),
\quad\forall t\in\R^+.
\eeq
}
\end{lem}

\begin{proof}
It suffices to prove \eqref{vL3} for smooth solutions of the equation \eqref{1.1}
\footnote{Indeed, we can take approximated smooth solutions $(\rho_n, v_n, \pi_n)$ of the equation \eqref{1.1} accompanied with the mollified initial data $(1+S_n(\rho_0-1), S_nv_0)$ such that \eqref{vL3} holds uniformly in $n$. Then a passage to the limit implies Lemma \ref{prop2.1}.}.
Taking $L^2(\R^3)$ inner product between the momentum equation in \eqref{1.1} with $v|v|$ gives
\beq\label{2.6}
\f13\f{d}{dt}\|\rho^{\f13}v(t)\|_{L^3}^3
-\int_{\R^3}\Delta v\cdot v|v|dx
+\int_{\R^3}\nabla\pi\cdot v|v|dx=0.
\eeq
Integrating by parts, we have
\beq\label{2.7}
\begin{split}
-\int_{\R^3}\Delta v\cdot v|v|dx
&=\int_{\R^3} |\nabla v|^2 |v|dx
+\int_{\R^3} \nabla |v|\cdot\nabla v\cdot v dx\\
&\geq \frac12\int_{\R^3} \nabla|v|\cdot \nabla|v|^2 dx
=\Bigl\|\frac 23\nabla |v|^{\f32}\Bigr\|_{L^2}^2.
\end{split}
\eeq
Applying the three dimensional interpolation inequality that
$$\|f\|_{L^q(\R^3)}
\leq C\|f\|_{L^2(\R^3)}^{\f3q-\frac 12}\|\nabla f\|_{L^2(\R^3)}^{\frac 32-\f3q}
\quad \mbox{for}\quad \forall q\in[2,6],$$
and Young's inequality, we obtain for $r\in [1,3]$
\beq\label{2.8}
\begin{split}
\int_{\R^3}\nabla\pi\cdot v|v|dx
&\leq\|\nabla\pi\|_{L^{\f{3r}{3r-2}}}\bigl\||v|^{\f32}\bigr\|_{L^{2r}}^{\f43}\\
&\leq\|\nabla\pi\|_{L^{\f{3r}{3r-2}}}\bigl\||v|^{\f32}\bigr\|_{L^2}^{2\cdot\f{3-r}{3r}}
\bigl\|\nabla |v|^{\f32}\bigr\|_{L^2}^{2\cdot\f{r-1}{r}}\\
&\leq\frac 19\bigl\|\nabla |v|^{\f32}\bigr\|_{L^2}^2
+C\|\nabla\pi\|_{L^{\f{3r}{3r-2}}}^{r}\bigl\||v|^{\f32}\bigr\|_{L^2}^2+C\|\nabla\pi\|_{L^{\f{3r}{3r-2}}}^{r}.
\end{split}\eeq
Substituting \eqref{2.7}, \eqref{2.8} into \eqref{2.6}, we get
$$
\f13\f{d}{dt}\bigl\|\rho^{\f13}v(t)\bigr\|_{L^3}^3+\frac 13\bigl\|\nabla |v|^{\f32}\bigr\|_{L^2}^2\leq
C\|\nabla\pi\|_{L^{\f{3r}{3r-2}}}^{r}\|v\|_{L^3}^3+C\|\nabla\pi\|_{L^{\f{3r}{3r-2}}}^{r}.
$$
Then we use \eqref{rho}, Gronwall's inequality, and the estimate \eqref{vLr}
to achieve \eqref{vL3}.
\end{proof}

We will also use the following two lemmas,
which can be proved exactly along the same line of the proofs of Lemma 4.1, Lemma 4.2 in
\cite{LZ}, and Lemma 7.3 in \cite{LPG}.
Hence we just sketch their proofs for the readers' convenience.

\begin{lem}\label{lem3.1}
{\sl Let $p\in [\frac 32,\infty[$, $r\in ]1,2[$, $s\in ]2, \infty]$ and $q\in
\bigl]\f{4p}{2p+3}, \frac{4p}{3}\bigr[.$ Let $v_0\in W^{1,p}$ and $
v_L(t)\eqdefa e^{t\D}v_0.$ Then there exists some positive constant $C$ such that
\beq\label{3.1}
\|\D v_L\|_{L^{r}_t(L^p)}+\bigl\|\na v_L\bigr\|_{L^{s}_t(L^p)}+\|\na
v_L\|_{L^q_t(L^{2p})}\leq C\|v_0\|_{W^{1,p}},
\quad \forall t\in \R^+.
\eeq}
\end{lem}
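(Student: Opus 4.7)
The plan is to establish each of the three bounds in \eqref{3.1} via the standard heat kernel estimates $\|e^{t\D}f\|_{L^q(\R^3)}\leq Ct^{-\f{3}{2}(\f1{p_0}-\f1q)}\|f\|_{L^{p_0}}$ and $\|\na e^{t\D}f\|_{L^q(\R^3)}\leq Ct^{-\f12-\f{3}{2}(\f1{p_0}-\f1q)}\|f\|_{L^{p_0}}$, valid for $1\leq p_0\leq q\leq\infty$, combined with the splitting $\int_0^\infty=\int_0^1+\int_1^\infty$. On the small-time interval I would place one derivative on $v_0$ (using $\na v_0\in L^p$), whereas on the large-time interval I would absorb every derivative into the semigroup (using $v_0\in L^p\subset W^{1,p}$). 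The heart of the argument is that these two regimes produce power-law bounds in $t$ whose integrability ranges match the hypotheses on $r,s,q$ exactly.

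Concretely, for $\|\D v_L\|_{L^r_t(L^p)}$, rewriting $\D e^{t\D}v_0=\na e^{t\D}\na v_0$ yields the small-time bound $\|\D v_L(t)\|_{L^p}\leq Ct^{-1/2}\|\na v_0\|_{L^p}$, which lies in $L^r(0,1)$ precisely when $r<2$; at large times $\|\D e^{t\D}v_0\|_{L^p}\leq Ct^{-1}\|v_0\|_{L^p}$ lies in $L^r(1,\infty)$ precisely when $r>1$. For $\|\na v_L\|_{L^s_t(L^p)}$, I would use $L^p$-contractivity $\|\na v_L(t)\|_{L^p}\leq\|\na v_0\|_{L^p}$ for small $t$ and $\|\na e^{t\D}v_0\|_{L^p}\leq Ct^{-1/2}\|v_0\|_{L^p}$ for large $t$, the latter being in $L^s(1,\infty)$ iff $s>2$ (the endpoint $s=\infty$ is handled by the uniform small-time bound). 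Finally, the $L^p$-to-$L^{2p}$ smoothing gives $\|\na v_L(t)\|_{L^{2p}}\leq Ct^{-\f{3}{4p}}\|\na v_0\|_{L^p}$ for small $t$, integrable in $L^q(0,1)$ iff $q<\f{4p}{3}$, and $\|\na e^{t\D}v_0\|_{L^{2p}}\leq Ct^{-\f12-\f{3}{4p}}\|v_0\|_{L^p}$ for large $t$, integrable in $L^q(1,\infty)$ iff $q>\f{4p}{2p+3}$.

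These four endpoint conditions correspond exactly to the hypotheses $r\in\,]1,2[$, $s\in\,]2,\infty]$ and $q\in\,]\f{4p}{2p+3},\f{4p}{3}[$, so the proof reduces to careful bookkeeping of the exponents and poses no substantive technical obstacle; summing the small-time and large-time contributions, each bounded by a constant multiple of $\|v_0\|_{L^p}+\|\na v_0\|_{L^p}=\|v_0\|_{W^{1,p}}$, yields \eqref{3.1}.
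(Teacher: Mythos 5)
Your proof is correct, but it follows a genuinely different route from the paper. The paper deduces \eqref{3.1} from Besov embeddings: $v_0\in W^{1,p}$ gives $\na^2 v_0\in \dB^{-1}_{p,\infty}\cap\dB^{-2}_{p,\infty}\hookrightarrow \dB^{-1-\sigma}_{p,1}$ and $\na v_0\in \dB^{0}_{p,\infty}\cap\dB^{-1}_{p,\infty}\hookrightarrow \dB^{-\sigma}_{p,1}\hookrightarrow \dB^{-\sigma-\frac{3}{2p}}_{2p,1}$ for all $\sigma\in\,]0,1[$, and then invokes the heat-flow characterisation of negative-index Besov norms, choosing $\sigma$ so that the time weight $t^{\frac12+\frac\sigma2-\frac1r}$ (resp. $t^{\frac\sigma2-\frac1s}$, $t^{\frac\sigma2+\frac{3}{4p}-\frac1q}$) disappears; the constraint $\sigma\in\,]0,1[$ is exactly what produces the open ranges for $r$, $s$, $q$. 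Your argument replaces this machinery with elementary $L^{p_0}\to L^q$ Gaussian smoothing estimates and a splitting of the time axis at $t=1$, putting the derivative on $v_0$ for small times and on the semigroup for large times; your exponent bookkeeping is right in all six cases (including the $s=\infty$ endpoint and the fact that $\|\D e^{t\D}v_0\|_{L^p}\le Ct^{-1}\|v_0\|_{L^p}$ follows from the gradient bound and the semigroup property), and it recovers precisely the stated ranges. What the paper's approach buys is uniformity with the Besov framework used for the companion Lemma\refer{lem3.2} and the references \cite{LZ,LPG}, packaging the two time regimes into a single interpolation statement; what yours buys is a self-contained, more elementary proof that makes transparent which endpoint each hypothesis on $r,s,q$ is guarding against. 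No gap.
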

\begin{proof}
The fact that $v_0\in W^{1,p}$ ensures
$$
\nabla^2 v_0\in \dot B^{-1}_{p,\infty}\cap \dot B^{-2}_{p,\infty}
\hookrightarrow \dot B^{-1-\sigma}_{p,1},
 \quad \nabla v_0\in \dot B^{0}_{p,\infty}\cap \dot B^{-1}_{p,\infty}
\hookrightarrow \dot B^{-\sigma}_{p,1}
\hookrightarrow \dot B^{-\sigma-\frac{3}{2p}}_{2p,1},
\quad \forall \sigma\in ]0,1[.
$$
Hence by the characterisation of the Besov spaces with negative index, we know
$\forall \sigma\in ]0,1[$,
\begin{align*}
\|t^{\frac 12+\frac \sigma2-\frac 1r} e^{t\Delta}\nabla^2 v_0\|_{L^r(\R^+;L^p)}&
+\|t^{\frac \sigma2-\frac 1s} e^{t\Delta}\nabla v_0\|_{L^s(\R^+;L^p)}
+\|t^{\frac \sigma2+\frac{3}{4p}-\frac 1q}e^{t\Delta}\nabla  v_0\|_{L^q(\R^+;L^{2p})}\\
&=\|\nabla^2 v_0\|_{\dot B^{-1-\sigma}_{p,r}}
+\|\nabla v_0\|_{\dot B^{-\sigma}_{p,s}}
+\|\nabla  v_0\|_{\dot B^{-\sigma-\frac{3}{2p}}_{2p,q}}
\leq C\| v_0\|_{W^{1,p}}.
\end{align*}
Using this with $\s=\frac1r-\frac12$ and $\frac1r$ respectively, we deduce
$$\|\D v_L\|_{L^r(\R^+;L^p)}\leq \|t^{\frac 12(\frac12-\frac 1r)} e^{t\Delta}\nabla^2 v_0\|_{L^r((0,1);L^p)}
+\|t^{\frac 12(1-\frac 1r)} e^{t\Delta}\nabla^2 v_0\|_{L^r((1,\infty);L^p)}\leq C\| v_0\|_{W^{1,p}}.$$
And the other two terms in the left hand side of \eqref{3.1} can be estimated similarly.
\end{proof}

\begin{lem}\label{lem3.2}
{\sl For any $p\in \bigl[\frac 32,\infty\bigr[,~r\in ]1,2[$,
and $q$ given by $\f1q=\f1r- \bigl(\frac 12-\f{3}{4p}\bigr)$,
there exists some positive constant $C$ such that
\begin{equation}\label{3.2}
\begin{split}
\Bigl\| \int^t_0 \Delta e^{(t-t')\Delta}& f(t')\,dt'
\Bigr\|_{L^r_T (L^p)}+
\Bigl\| \int^t_0 \nabla e^{(t-t')\Delta} f(t')\,dt'
\Bigr\|_{L^{\frac{2r}{2-r}}_T (L^p)}\\
&+\Bigl\| \int^t_0 \nabla
e^{(t-t')\Delta} f(t')\,dt' \Bigr\|_{L^{q}_T (L^{2p})}
\leq C\|f\|_{L^r_T(L^p)},
\quad\forall T\in \R^+,
\end{split}
\end{equation}}
\end{lem}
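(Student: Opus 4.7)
The estimate has three pieces, and I would handle them as three separate sub-estimates bundled together.

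For the first term, $\bigl\|\int_0^t \Delta e^{(t-t')\Delta}f(t')\,dt'\bigr\|_{L^r_T(L^p)} \lesssim \|f\|_{L^r_T(L^p)}$, the plan is to invoke maximal $L^p$-regularity for the heat semigroup on $\R^3$ (de Simon's theorem, or equivalently Lamberton's extension to $L^r(L^p)$ with $1<r<\infty$, $1<p<\infty$). Since $L^p(\R^3)$ is a UMD space and the Laplacian has bounded $H^\infty$-calculus of angle $0$, the operator $f\mapsto \int_0^t \Delta e^{(t-t')\Delta}f(t')\,dt'$ is bounded on $L^r(\R^+;L^p(\R^3))$ for every $1<r<\infty$, hence in particular for $r\in ]1,2[$. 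Alternatively, one may cite this as a classical Calderón--Zygmund estimate for the parabolic singular kernel $\nabla^2 e^{t\Delta}$, which is the only genuinely non-elementary ingredient in the proof.

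For the second and third terms I would reduce to one-dimensional Hardy--Littlewood--Sobolev on the time variable. Using the standard heat-kernel bound
\[
\bigl\|\nabla e^{\tau\Delta} g\bigr\|_{L^{p_2}(\R^3)} \leq C\,\tau^{-\frac12-\frac{3}{2}\bigl(\frac1{p_1}-\frac1{p_2}\bigr)}\|g\|_{L^{p_1}(\R^3)},\qquad p_1\leq p_2,
\]
one has, with $p_1=p_2=p$,
\[
\Bigl\|\int_0^t \nabla e^{(t-t')\Delta}f(t')\,dt'\Bigr\|_{L^p(\R^3)} \leq C\int_0^t (t-t')^{-\frac12}\|f(t')\|_{L^p}\,dt',
\]
and HLS in the time variable with $\alpha=\tfrac12$ yields the $L^{\frac{2r}{2-r}}_T(L^p)$ bound, since $\frac1r-\frac{2-r}{2r}=\frac12=1-\alpha$. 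For the third piece, with $p_1=p$ and $p_2=2p$ the kernel has exponent $\alpha=\tfrac12+\tfrac{3}{4p}$, so HLS gives control in $L^q_T(L^{2p})$ with precisely $\frac1q = \frac1r-(1-\alpha) = \frac1r - \bigl(\frac12-\frac{3}{4p}\bigr)$, matching the exponent in the statement. The admissibility ranges of $r$ and $p$ in the hypothesis are exactly those under which the HLS exponents stay in $]1,\infty[$.

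The only nontrivial step is the first one: the two gradient estimates are soft, following from pointwise heat-kernel decay plus HLS, whereas the $\Delta e^{t\Delta}$ estimate really needs the parabolic maximal regularity machinery. Once this is cited, the three pieces combine additively to produce the claimed inequality; uniformity in $T$ is automatic because the bounds for each piece are $T$-independent.
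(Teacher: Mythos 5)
Your proposal is correct and follows essentially the same route as the paper: the authors likewise reduce the two gradient terms to pointwise heat-kernel bounds (via the rescaled kernel $K$) followed by fractional integration in the time variable, and they dispose of the $\Delta e^{(t-t')\Delta}$ term by citing the maximal $L^r_t(L^p_x)$ regularity of the heat semigroup (Lemma 7.3 of Lemari\'e-Rieusset's book), which is exactly the de Simon/Calder\'on--Zygmund input you invoke. The one inaccuracy is your closing claim that the stated hypotheses are exactly the HLS-admissible range: at the endpoint $p=\frac32$ the third term has time kernel $(t-t')^{-1}$ (since $\frac12+\frac{3}{4p}=1$) and HLS fails there, so that case must instead be recovered from maximal regularity together with the embedding $\dot W^{2,3/2}\hookrightarrow \dot W^{1,3}$ --- an endpoint that is irrelevant for the paper's application, where $p>3$.
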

\begin{proof}
To prove \eqref{3.2} it suffices to write
$$
e^{(t-t')\Delta} f(t')=\frac{1}{\sqrt{t-t'}^3}K(\frac{\cdot}{\sqrt{t-t'}})\ast f(t', \cdot),
$$
with $K$ denoting the inverse Fourier transform of $e^{-|\xi|^2}$,
and then to take Young's inequality first in the space variable and  then in the time variable.
\end{proof}

Now we are in a position to prove $J_0(t)\leq C_0$.
As we are considering
perturbations of the reference density $1,$ it is convenient to set
$a\eqdefa 1/\rho-1$ so that System \eqref{1.1} translates into
\begin{equation}\label{2.1}
 \quad\left\{\begin{array}{lll}
\displaystyle \pa_t a + v \cdot \grad a=0 \qquad (t,x)\in\R^+\times\R^3,  \\
\displaystyle \pa_t v + v \cdot \grad v+ (1+a)(\grad\pi-\D v)=0, \\
\displaystyle \dive\, v = 0, \\
 \displaystyle (a, v)|_{t=0}=(a_0, v_{0}).
\end{array}\right.
\end{equation}

For any $\la>0$, and any function $g(t)$,  we denote
\begin{equation*} \label{3.5}
g_\la(t)\eqdefa
g(t)\exp\bigl(-\la\int_0^t V(t')\,dt'\bigr),
\quad \hbox{ with }V(t)\eqdefa
\|v(t)\|_{L^{2p}}^{\f{4p}{2p-3}}\geq 0.
\end{equation*}
Then by virtue of \eqref{2.1}, $v_\la$ satisfies
\begin{equation}\label{vlambda}
\d_t v_\lambda+ \lambda V(t)v_\lambda=(\d_t v)_\lambda
=\Delta v_\lambda
+(-v\cdot\nabla
v_\la +a\Delta v_\la-(1+a)\nabla\pi_\la),
\end{equation}
which can also be written in the form
\beq \label{3.6}
v_\la(t)=e^{t\Delta}v_{0,\lambda} +\int^t_0
e^{-\la\int_{t'}^t V(t'')\,dt''} e^{(t-t')\Delta}
\Bigl( -v\cdot\nabla
v_\la +a\Delta v_\la-(1+a)\nabla\pi_\la \Bigr)(t')\,dt'.
\eeq
Taking space divergence to  \eqref{vlambda} gives
 $$
 \D\pi_\la=-\dive(v\cdot\na
v_\la)+\dive\bigl(a(\D v_\la-\na\pi_\la)\bigr),
$$
from which and the fact that
$$\|a\|_{L^\infty}\leq\|a_0\|_{L^\infty}=\frac{|\eta|}{1-|\eta|}$$
since $a$ satisfies a free transport equation in \eqref{2.1}, we infer
\beq\label{3.7}
\|\na\pi_\la(t)\|_{L^p}\leq C\bigl(\|v\cdot\na
v_\la(t)\|_{L^p}+|\eta|\|\D v_\la(t)\|_{L^p}\bigr).
 \eeq
In view of \eqref{3.6}, we get, by applying
Lemma \ref{lem3.1}, Lemma \ref{lem3.2} and \eqref{3.7}, that
\begin{equation*}\begin{split}
\|\D v_\la&\|_{L^{r_0}_t(L^p)}
+\|\na v_\la\|_{L^{\f{2r_0}{2-r_0}}_t(L^p)}
+\|\na v_\la\|_{L^{q_0}_t(L^{2p})}\\
\leq &\|\D v_L \|_{L^{r_0}_t(L^p)}
+\|\na v_L \|_{L^{\f{2r_0}{2-r_0}}_t(L^p)}
+\|\na v_L \|_{L^{q_0}_t(L^{2p})}\\
&+C\biggl(\int_0^te^{-\la r_0\int_{t'}^t V(t'')\,dt''}\Bigl(\|v\cdot\na
v_\la(t')\|_{L^p}^{r_0}+\|a(t')\|_{L^\infty}^{r_0}\|\D
v_\la(t')\|_{L^p}^{r_0}\\
&\qquad\qquad\qquad\qquad\qquad\qquad\qquad\quad+\bigl(1+\|a(t')\|_{L^\infty}^{r_0}\bigr)\|\na\pi(t')\|_{L^p}^{r_0}\Bigr)\,dt'\biggr)^{\f{1}{r_0}}\\
\leq& C\biggl(\|v_0\|_{W^{1,p}}+|\eta|\|\D
v_\la\|_{L^{r_0}_t(L^p)} +\Bigl(\int_0^te^{-\la
r_0\int_{t'}^t  V(t'')\,dt''}\|v\cdot\na
v_\la(t')\|_{L^p}^{r_0}\,dt'\Bigr)^{\f{1}{r_0}}\biggr),
\end{split}\end{equation*}
where $q_0$ is taken to be $\f{1}{q_0}=\f{1}{r_0}- \bigl(\frac 12-\f{3}{4p}\bigr)$.
Thus we can use H\"older's inequality to get
\beno
\begin{split}
\Bigl(\int_0^t&e^{-\la r_0\int_{t'}^t V(t'')\,dt''}\|v\cdot\na
v_\la(t')\|_{L^p}^{r_0}\,dt'\Bigr)^{\f{1}{r_0}}\\
\leq
&\Bigl(\int_0^te^{-\f{4p }{2p-3}\lambda
\int_{t'}^t V(t'')\,dt''}\|v(t')\|_{L^{2p}}^{\f{4p}{2p-3}}\,dt'\Bigr)^{\f{2p-3}{4p}}\|\na
v_\la\|_{L^{q_0}_t(L^{2p})}\\
\leq &C \la ^{-\f{2p-3}{4p}}\|\na v_\la\|_{L^{q_0}_t(L^{2p})}.
\end{split}
\eeno
We take $|\eta|$ small enough and $\lambda$ large enough  to achieve
\beq \label {3.9}
\|\D v_\la\|_{L^{r_0}_t(L^p)}
+\|\na
v_\la\|_{L^{\f{2r_0}{2-r_0}}_t(L^p)}
+\|\na
v_\la\|_{L^{q_0}_t(L^{2p})}
\leq C\|v_0\|_{W^{1,p}} .
\eeq
By use of  the estimates \eqref{vLr} and \eqref{vL3},
we  deduce from the interpolation inequality that
\begin{equation}\label{3.12}
\bigl(\int^t_0 V(t')\,dt'\bigr)^{\frac{2p-3}{4p}}
=\|v\|_{L_t^{\f{4p}{2p-3}}(L^{2p})}
\leq
C\|v\|_{L_t^{\infty}(L^3)}^{1-\theta}\|\Delta v\|_{L_t^r(L^{\f{3r}{3r-2}})}^\theta
\leq C_0,
\end{equation}
where $\theta=\f{(2p-3)r}{4p}\in]0,1[$.
Thus we deduce from \eqref{3.9} that
\beq\label{3.13}
\begin{split}
\|\D v\|_{L^{r_0}_t(L^p)}
+&\|\na
v\|_{L^{\f{2r_0}{2-r_0}}_t(L^p)}+\|\na v\|_{L^{q_0}_t(L^{2p})}\\
\leq& C\|v_0\|_{W^{1,p}}\cdot e^{\la \int_0^t V(t')\,dt'}\leq C\|v_0\|_{W^{1,p}}\cdot e^{\la
C_0}\leq C_0.
\end{split} \eeq
This together with \eqref{3.7} and \eqref{3.12} ensures that
\beq\label{3.14}
\begin{split} \|\na\pi\|_{L^{r_0}_t(L^p)}\leq
C\Bigl(\|v\|_{L^{\f{4p}{2p-3}}_t(L^{2p})}\|\na
v\|_{L^{q_0}_t(L^{2p})}+|\eta|\|\D v\|_{L^{r_0}_t(L^p)}\Bigr)\leq C_0,
\end{split}
\eeq
and hence we deduce from the velocity equation in \eqref{2.1} that
\beq\label{3.14b}
\begin{split} \|\d_t v\|_{L^{r_0}_t(L^p)} \leq C_0.
\end{split}
\eeq

Moreover, for any $p>3,\ r_0\in]1,2[$, there holds
\beq\label{3.15}
\|\nabla v\|_{L_t^{\f{2pr_0}{2p+3r_0-pr_0}}(L^\infty)}\leq C\|\na
v\|_{L^{\f{2r_0}{2-r_0}}_t(L^p)}^{1-\f3p}\|\D v\|_{L^{r_0}_t(L^p)}^{\f3p}\leq C_0.
\eeq
It is easy to observe that when $r_0$ varies from 1 to 2, we have
$\sigma_1\eqdefa\f{2pr_0}{2p+3r_0-pr_0}\in\bigl]\f{2p}{p+3},\f{2p}{3} \bigr[$.
Similarly, we deduce that for $\sigma_2\eqdefa \f{2p-3}{p}\cdot\f{2r_0}{2-r_0} \in\bigl]\f{4p-6}{p},\infty \bigr[$, there holds
\begin{equation}\label{3.16}
\|v\|_{L_t^{\sigma_2}(L^\infty)}\leq
C\|v\|_{L_t^{\infty}(L^3)}^{\f{p-3}{2p-3}}\|\nabla v\|_{L_t^{\f{2r_0}{2-r_0}}(L^p)}^{\f{p}{2p-3}}\leq C_0.
\end{equation}

By view of the definition \eqref{J0}, we deduce $J_0(t)\leq C_0$ from the
estimates \eqref{vL3}, \eqref{3.13}, \eqref{3.14}, \eqref{3.14b}, \eqref{3.15}, \eqref{3.16},
which completes the proof of \eqref{bound} for $\ell=0$.

\subsection{The proof of \eqref{bound} for $\ell=1$.}\label{sub2.2}

We first deduce from the equation \eqref{W} and the proved fact $J_0(t)\leq C_0$ that
for any $p\in]3,\infty[$,
\begin{equation}\label{X:Lp}
\begin{split}
\| X_i(t)\|_{L^{p}\bigcap L^\infty}
\leq
\|X_{i,0}\|_{L^{p}\bigcap L^\infty}
\exp\Bigl(\int_0^t\|\na v(t')\|_{W^{1,p}}\,dt'\Bigr)
\leq \cH_1(t).
\end{split} \end{equation}
Then by applying $\pa_k,~k=1,2,3$ to equation \eqref{W}, we obtain
$$
\d_t \pa_k X_i+v\cdot \nabla \pa_k X_i+\d_k v\cdot\nabla X_i =\d_k
X_i\cdot\nabla v+X_i\cdot\na \d_k v.
$$
Then the standard energy estimates for transport equation leads to
\begin{equation}\label{X:W1p}\begin{split}
\|\na X_i(t)\|_{L^p}&\leq \Bigl(\|\na
X_{i,0}\|_{L^p}+\|X_i\|_{L^\infty_t(L^\infty)}\|\na^2
v\|_{L^1_t(L^p)}\Bigr)\exp\Bigl(2\int_0^t\|\na
v(t')\|_{L^\infty}\,dt'\Bigr)\\
& \leq  \cH_1(t).
\end{split}\end{equation}
Similarly, by applying the operator $\Delta$ to \eqref{W},
we get
$$
\d_t \Delta X_i+v\cdot\nabla\Delta X_i+2 \sum_{j=1}^3\p_jv \cdot
\nabla\p_jX_i
  +\Delta v\cdot\nabla X_i
=\Delta \d_{X_i} v.$$
Noting that $p\in]3,\infty[$, thus we can achieve, by using the standard $L^p$ energy estimate
and the interpolation inequality $\|f\|_{L^\infty}\leq C\|f\|_{L^p}^{1-\f3p}\|\na
f\|_{L^p}^{\f3p}\leq C\bigl(\|f\|_{L^p}+\|\na
f\|_{L^p}\bigr)$, that
\begin{align*}
\f{d}{dt}\|\D X_i(t)\|_{L^p}\leq & 2\|\na
v(t)\|_{L^\infty}\|\na^2X_i(t)\|_{L^p}+\|\D v(t)\|_{L^p}\|\na
X_i(t)\|_{L^\infty}+\|\D\p_{X_i}v(t)\|_{L^p}\\
\leq &C\Bigl(\bigl(\|\na v(t)\|_{L^p}+\|\D v(t)\|_{L^p}\bigr)\|\D
X_i(t)\|_{L^p}\\
&\qquad+\|\D v(t)\|_{L^p}\bigl(\|\na X_i(t)\|_{L^p}+\|\D X_i(t)\|_{L^p}\bigr)
+\|\D\p_{X_i}v(t)\|_{L^p}\Bigr).
\end{align*}
Applying Gronwall's inequality, together with \eqref{X:W1p} and $J_0(t)\leq C_0$, leads to
\begin{equation}\label{X:W2p}\begin{split}
\|\D X_i(t)\|_{L^p}\leq &C\Bigl(\|\D
X_{i,0}\|_{L^p}+ \|\D v\|_{L^1_t(L^p)}\|\na X_i\|_{L^\infty_t(L^p)}+\|\D
\p_{X_i}v\|_{L^1_t(L^p)}\Bigr)\\
&\quad\times\exp\Bigl(C\bigl(\|\na
v\|_{L^1_t(L^p)}+\|\D v\|_{L^1_t(L^p)}\bigr)\Bigr)\\
\leq & \cH_1(t)\bigl(1+\|\D \p_{X_i}v\|_{L^1_t(L^p)}\bigr).
\end{split}\end{equation}

Next we shall focus on the estimate of $\|\D \p_{X_i}v\|_{L^1_t(L^p)}$.
To do this, we first apply the operator $\d_{X_i}$ to the velocity equation in \eqref{2.1} to get the equation for $\d_{X_i} v$:
\begin{equation}\begin{split}\label{4.13}
\d_t (\d_{X_i}& v)
+v\cdot\nabla(\d_{X_i} v)
+(1+a)(\nabla\d_{X_i} \pi-\Delta \d_{X_i} v)=F_1\bigl(v,\pi,(i)\bigr),\quad \mbox{with}\\
&F_1\bigl(v,\pi,(i)\bigr)\eqdefa
(1+a)\bigl(\nabla X_i\cdot\nabla \pi-\Delta X_i\cdot\nabla v
-2\nabla X_i:\nabla^2 v\bigr),
\end{split}\end{equation}

A direct calculation shows that $(\d_{X_i} \pi)$ satisfies (noticing that $\div X_{i}=0$)
\begin{align*}
\div\bigl((1+a)&\nabla\d_{X_i} \pi\bigr)
=\div\Bigl(-\d_t(\d_{X_i} v) -v\cdot\nabla (\d_{X_i} v)
+\Delta\d_{X_i} v +a\Delta \d_{X_i} v+ F_1\Bigr)\\
&=\div\Bigl( -\d_t X_i\cdot\nabla v-\d_t v\cdot\nabla X_i-v\cdot\nabla(\d_{X_i} v)
+\Delta X_i\cdot\nabla v+2\nabla X_i:\nabla^2 v\\
&\qquad
+\Delta v\cdot\nabla X_i+a\Delta \d_{X_i} v+F_1(v,\pi,(i))\Bigr)\eqdefa\div G,
\end{align*}
which implies
$$\nabla\d_{X_i} \pi=\nabla\D^{-1}\div\bigl(G-a\nabla\d_{X_i} \pi\bigr).$$
Then by using the fact that Riesz transform is $L^p,~1<p<\infty$ bounded, we obtain
\begin{equation}\label{3.30}
(1-C\|a_0\|_{L^\infty})\|\nabla\d_{X_i} \pi\|_{L^p}\leq C\|G\|_{L^p}.
\end{equation}
For any $r_1\in \bigl]1, \f{2k}{k+\e}\bigr[,$  we can find some
$r_0\in]r_1,2[$ and
$\sigma_1\in
\bigl]r_1, \frac {2p}{3}\bigr[$.
Then we can use the equations \eqref{W},
together with the estimates \eqref{X:Lp},~\eqref{X:W1p},~$J_0(t)\leq C_0$, to get
\begin{equation}\begin{split}\label{3.31}
&\Bigl\|\Bigl(\d_t X_i\cdot\nabla v,\, v\cdot\nabla(\d_{X_i}v)\Bigr)\Bigr\|_{L^{r_1}_t(L^p)}
+\Bigl\|\Bigl(\d_t v\cdot\nabla X_i,\,
\nabla X_i:\nabla^2 v\Bigr)\Bigr\|_{L^{r_1}_t(L^p)}\\
&\lesssim\|\nabla v\|_{L^{\s_{1}}_t(L^\infty)}
\Bigl( \|X_i\|_{L^{\infty}_t(L^\infty)}\|\nabla v\|_{L^{\frac{\s_1 r_1}{\s_1-r_1}}_t(L^p)}
+\|\nabla X_i\|_{L^{\infty}_t(L^p)}\|v\|_{L^{\frac{\s_1 r_1}{\s_1-r_1}}_t(L^\infty)}\Bigr) \\
&\qquad +\|X_i\|_{L^\infty_t(L^\infty)}
 \|\nabla^2 v\|_{L^{r_{0}}_t(L^p)}
\|v\|_{L^{\frac{r_0 r_1}{r_0-r_1}}_t(L^\infty)}
+\bigl\|\bigl(\pa_t v,\,\D v\bigr)\bigr\|_{L^{r_1}_t(L^p)}\|\nabla X_i\|_{L^\infty_t(L^\infty)}\\
&\lesssim \cH_1(t)\bigl(1+\|\nabla X_i\|_{L^\infty_t(L^\infty)}\bigr).
\end{split}\end{equation}
And Similarly we have
\begin{equation}\begin{split}\label{3.32}
\|F_1\|_{L^{r_1}_t(L^p)}&\leq\|\nabla{X_i}\|_{L^\infty_t(L^\infty)}\bigl(\|\D
v\|_{L^{r_1}_t(L^p)}+\|\na\pi\|_{L^{r_1}_t(L^p)}\bigr)
+\|\Delta X_i\cdot\nabla v\|_{L^{r_1}_t(L^p)}\\
&\leq C_0\|\na
X_i\|_{L^\infty_t(L^\infty)}+\Bigl(\int_0^t\|\na
v(t')\|_{L^\infty}^{r_1}\|\D
X_i(t')\|_{L^p}^{r_1}\,dt'\Bigr)^{\f1{r_1}}.
\end{split}\end{equation}

Then we achieve, by substituting \eqref{3.31} and \eqref{3.32} into \eqref{3.30},
and choosing $\|a_0\|_{L^\infty}$ to be sufficiently small, that
\begin{equation}\begin{split}\label{3.33}
\|\nabla\d_{X_i} \pi\|_{L^{r_1}_t(L^p)}\leq
\cH_1(t)\bigl(1+\|\nabla X_i&\|_{L^\infty_t(L^\infty)}\bigr)
+\|a_0\|_{L^\infty}\|\D\pa_{X_i}v\|_{L^{r_1}_t(L^p)}\\
&+\Bigl(\int_0^t\|\na
v(t')\|_{L^\infty}^{r_1}\|\D X_i(t')\|_{L^p}^{r_1}\,dt'\Bigr)^{\f1{r_1}}.
\end{split}\end{equation}

On the other hand, we can get from \eqref{4.13} that
\begin{equation}\begin{split}\label{3.34}
&(\d_{X_i} v)(t)=
e^{t\Delta}(\d_{X_{i,0}} v_0)\\
&\quad+\int^t_0 e^{(t-t')\Delta}
\Bigl( -v\cdot\nabla(\d_{X_i} v)
+a\Delta \d_{X_i} v
-(1+a)\nabla\d_{X_i} \pi+F_1\bigl(v,\pi,(i)\bigr) \Bigr)(t')\,dt'.
\end{split}\end{equation}
Similarly as Lemma \ref{lem3.1}, we can prove the following estimate:
\begin{equation}\label{3.35}
\bigl\|\D e^{t\D}\p_{X_{i,0}}^\ell v_0\bigr\|_{L^{r_\ell}_t(L^p)}
+\bigl\|\na e^{t\D}\p_{X_{i,0}}^\ell v_0\bigr\|_{L^{s_\ell}_t(L^p)}\leq C\|\p_{X_{i,0}}^\ell
v_0\|_{W^{1-\f\ell{k}\e,p}},
\quad \forall \ell=1,\cdots,k.
\end{equation}
Using this, we infer from \eqref{3.34} that
\begin{equation*}\begin{split}
\|\p_t\p_{X_i} v\|_{L^{r_1}_t(L^p)}+&\|\D\p_{X_i} v\|_{L^{r_1}_t(L^p)}
\leq
\|\p_{X_{i,0}}v_0\|_{W^{1-\f\e{k},p}}+C\Bigl(\|v\cdot\na(\p_{X_i}
v)\|_{L^{r_1}_t(L^p)}\\
&+\|a_0\|_{L^\infty}\|\D\p_{X_i}v\|_{L^{r_1}_t(L^p)}+
\|\na\p_{X_i}\pi\|_{L^{r_1}_t(L^p)}+\|F_1\|_{L^{r_1}_t(L^p)}\Bigr).\\
\end{split}\end{equation*}
Hence by virtue of \eqref{3.31}-\eqref{3.33}, we get, by taking $\|a_0\|_{L^\infty}$
to be sufficiently small, that
\begin{equation}\begin{split}\label{3.36}
\|\p_t\p_{X_i}& v\|_{L^{r_1}_t(L^p)}+\|\D\p_{X_i} v\|_{L^{r_1}_t(L^p)}\\
&\leq \cH_1(t)\bigl(1+\|\na
X_i\|_{L^\infty_t(L^\infty)}\bigr)
+C\Bigl(\int_0^t\|\na v(t')\|_{L^\infty}^{r_1}\|\D
X_i(t')\|_{L^p}^{r_1}\,dt'\Bigr)^{\f1{r_1}}.
\end{split}\end{equation}
Substituting the above estimate into \eqref{X:W2p},
and using \eqref{X:W1p}, gives rise to
\begin{equation*}\begin{split}
\|\D X_i\|_{L^\infty_t(L^p)}
&\leq\cH_1(t)\Bigl(1+\|\na X_i\|_{L^\infty_t(L^p)}^{1-\frac2p}
\|\D X_i\|_{L^\infty_t(L^p)}^{\frac2p}
+\bigl(\int_0^t\|\na v(t')\|_{L^\infty}^{r_1}
\|\D X_i(t')\|_{L^p}^{r_1}\,dt'\bigr)^{\f1{r_1}}\Bigr)\\
&\leq \cH_1(t)+\frac12\|\D X_i\|_{L^\infty_t(L^p)}
+\cH_1(t)\Bigl(\int_0^t\|\na v(t')\|_{L^\infty}^{r_1}
\|\D X_i(t')\|_{L^p}^{r_1}\,dt'\Bigr)^{\f1{r_1}},
\end{split}\end{equation*}
which gives
$$\|\D X_i\|_{L^\infty_t(L^p)}^{r_1}\leq
\cH_1(t)
+\cH_1(t)\int_0^t\|\na v(t')\|_{L^\infty}^{r_1}
\|\D X_i(t')\|_{L^p}^{r_1}\,dt'.$$
Then using Gronwall's inequality, together with the fact $J_0(t)\leq C_0$,
 we obtain
\begin{equation}
\|\D X_i\|_{L^\infty_t(L^p)}^{r_1}\leq\cH_1(t)
\exp\Bigl(\cH_1(t)\int_0^t\|\na v(t')\|_{L^\infty}^{r_1}\,dt'\Bigr)
\leq\cH_2(t).
\end{equation}
This together with \eqref{3.33} and \eqref{3.36} leads to
\begin{equation}
\|\nabla\d_{X_i} \pi\|_{L^{r_1}_t(L^p)}
+\|\p_t\p_{X_i}v\|_{L^{r_1}_t(L^p)}+\|\D\p_{X_i} v\|_{L^{r_1}_t(L^p)}
\leq\cH_2(t).
\end{equation}

By now, we have completed the proof of \eqref{bound} for $\ell=1$.

\subsection{The proof of \eqref{bound} for $\ell\geq 2$.}\label{sub2.3}
\
First, we would like to point out that, the Section $6$ of \cite{LZ}
gives the corresponding proof for 2-D case,
but the estimates there are in fact independent of spatial dimension.
Thus all we need to do here is to write clear the
differences in notations, as
we need to consider taking $\ell(\geq 2)$ order derivatives in
different tangential directions.

Before proceeding, we state the following commutator estimates which can be
viewed as generalizations of Lemma $6.1$ and Remark $6.1$ in \cite{LZ}.
\begin{lem}\label{lem4.1}
{\sl For any $\ell\in\{1,\ldots,k\}$ and any system of vector fields $\cW=\{W_1, \cdots, W_N\}$,
 let $\alpha(\ell)=(\alpha_1,\ldots,\alpha_\ell)$ be a
multi-index of length $\ell$ with
 indices taking value in $\{1,\ldots,N\}$,
and we denote
$\widehat\alpha(i)=(\alpha_{\ell-i+1},\ldots,\alpha_\ell)$
for $i=0,\cdots,\ell$.
Then  there exists a positive constant $C$ such that $\forall\,r_\ell \in
\bigl]1,\f{2k}{k+\ell\e}\bigr[$, $s_\ell\in\bigl]2,\f{2k}{\ell\e}\bigr[$, $\forall\,X\in \cW$,
$\forall 1\leq i\leq\ell$,
\begin{equation}\label{4.3}\begin{split}
&\bigl\|\d_{\cW}^{\alpha(i)}\nabla \d_{\cW}^{\widehat\alpha(\ell-i)} X-\nabla \d_{\cW}^{\alpha(\ell)}
X\|_{L^\infty_t(W^{1,p})}+\bigl\|\d_{\cW}^{\alpha(i)}\nabla^2\d_{\cW}^{\widehat\alpha(\ell-i)} X-\nabla^2 \d_{\cW}^{\alpha(\ell)}
X\bigr\|_{L^\infty_t(L^p)}\\
&\qquad\qquad+\bigl\|\d_{\cW}^{\alpha(i)}\d_t\d_{\cW}^{\widehat\alpha(\ell-i)}
X-\d_t\d_{\cW}^{\alpha(\ell)} X\bigr\|_{L^{s_\ell}_t(W^{1,p})} \leq
CJ_{\ell}^{\ell+1},
\end{split} \end{equation}
and when $i\neq \ell$, there holds
\begin{equation} \label{4.4}\begin{split}
&\bigl\|\d_{\cW}^{\alpha(i)} \nabla \d_{\cW}^{\widehat\alpha(\ell-i)} v-\nabla\d_{\cW}^{\alpha(\ell)} v\bigr\|
_{L^{r_{\ell-1}}_t(L^\infty)\cap
L^{s_{\ell-1}}_t(L^p)}\\
&+\bigl\|\d_{\cW}^{\alpha(i)} \nabla^2 \d_{\cW}^{\widehat\alpha(\ell-i)}
v-\nabla^2\d_{\cW}^{\alpha(\ell)} v\bigr\| _{L^{r_{\ell-1}}_t(L^p)}
+\bigl\|\d_{\cW}^{\alpha(i)} \nabla \d_{\cW}^{\widehat\alpha(\ell-i)} \pi-\nabla\d_{\cW}^{\alpha(\ell)}
\pi\bigr\|_{L^{r_{\ell-1}}_t(L^p) }\\
&+\bigl\|\d_{\cW}^{\alpha(i)} \d_t\d_{\cW}^{\widehat\alpha(\ell-i)}
v-\d_t\d_{\cW}^{\alpha(\ell)} v\| _{L^{r_{\ell-1}}_t(L^p) } \leq C
J_{\ell-1}^{\ell+1} ,
\end{split} \end{equation}
when $i=\ell$, there holds
\begin{equation}\label{4.5}\begin{split}
&\bigl\|\d_{\cW}^{\alpha(\ell)}\nabla v-\nabla\d_{\cW}^{\alpha(\ell)}
v +\d_{\cW}^{\alpha(\ell-1)}\nabla X_{\alpha_\ell}\cdot\nabla v
\bigr\|_{L^{r_{\ell-1}}_t(L^\infty)\cap L^{s_{\ell-1}}_t(L^p)}
\leq C J_{\ell-1}^{\ell+1},
\\
&\bigl\|\d_{\cW}^{\alpha(\ell)} \Delta   v - \Delta\d_{\cW}^{\alpha(\ell)} v +\d_{\cW}^{\alpha(\ell-1)}
\Delta X_{\alpha_\ell}\cdot\nabla v +2\d_{\cW}^{\alpha(\ell-1)}\nabla X_{\alpha_\ell}:\nabla^2
v\bigr\|_{L^{r_{\ell-1}}_t(L^p)} \leq C J_{\ell-1}^{\ell+1},
\\
&\bigl\|\d_{\cW}^{\alpha(\ell)} \nabla   \pi-\nabla\d_{\cW}^{\alpha(\ell)} \pi
+\d_{\cW}^{\alpha(\ell-1)}\nabla X_{\alpha_\ell}\cdot\nabla\pi\bigr\| _{L^{r_{\ell-1}}_t(L^p)
} \leq C J_{\ell-1}^{\ell+1},\\
&\bigl\|\d_{\cW}^{\alpha(\ell)} \d_t v-\d_t\d_{\cW}^{\alpha(\ell)} v +\d_{\cW}^{\alpha(\ell-1)}\d_t
X_{\alpha_\ell}\cdot\nabla v\bigr\| _{L^{r_{\ell-1}}_t(L^p) } \leq C
J_{\ell-1}^{\ell+1}.
\end{split} \end{equation}
Moreover, it follows  from \eqref{Jell}, \eqref{4.3}, \eqref{4.4} and \eqref{4.5} that
for any $0\leq i\leq\ell$,
\begin{equation}\label{4.6}\begin{split}
&\|\d_{\cW}^{\alpha(i)}\nabla \d_{\cW}^{\widehat\alpha(\ell-i)} X \|_{L^\infty_t(W^{1,p})}
+\|\d_{\cW}^{\alpha(i)}\nabla^2\d_{\cW}^{\widehat\alpha(\ell-i)} X \|_{L^\infty_t(L^p)}\\
&\qquad\qquad+\|\d_{\cW}^{\alpha(i)}\d_t\d_{\cW}^{\widehat\alpha(\ell-i)} X \|_{L^{s_{\ell+1}}_t(W^{1,p})} \leq C
J_{\ell+1}^{\ell+1},\quad\mbox{and}
\end{split} \end{equation}

\begin{equation}\label{4.7}
\begin{split}
&\|\d_{\cW}^{\alpha(i)} \nabla \d_{\cW}^{\widehat\alpha(\ell-i)} v \| _{L^{r_{\ell }}_t(L^\infty)\cap
L^{s_{\ell}}_t(L^p)} +\|\d_{\cW}^{\alpha(i)} \nabla^2 \d_{\cW}^{\widehat\alpha(\ell-i)} v
\|_{L^{r_{\ell }}_t(L^p)}
\\
&\qquad\qquad\qquad+\|\d_{\cW}^{\alpha(i)} \nabla \d_{\cW}^{\widehat\alpha(\ell-i)} \pi \|
_{L^{r_{\ell }}_t(L^p) }+\|\d_{\cW}^{\alpha(i)} \d_t\d_{\cW}^{\widehat\alpha(\ell-i)} v \|
_{L^{r_{\ell }}_t(L^p) } \leq C J_{\ell}^{\ell+1}.
\end{split} \end{equation}
}
\end{lem}

\begin{proof}
Firstly, for any $X,Y\in {\cW}$, it is easy to observe that
\beno
\begin{split}
\|\p_Y\na
X-\na\p_YX&\|_{L^\infty_t(W^{1,p})}+\|\p_Y\na^2X-\na^2\p_YX\|_{L^\infty_t(L^p)}
+\|\p_Y\p_tX-\p_t\p_YX\|_{L^{s_1}_t(W^{1,p})}\\
&\qquad\quad\leq C\|\na X\|_{L^\infty_t(W^{1,p})}\bigl(\|\na
Y\|_{L^\infty_t(W^{1,p})}+\|\p_tY\|_{L^{s_1}_t(W^{1,p})}\bigr)\leq
CJ_1^2.
\end{split}
\eeno
This shows that \eqref{4.3} holds for $\ell=1.$
It is also easy to see that
\eqref{4.4} and \eqref{4.5} hold trivially for $\ell=1.$
Hence Lemma \ref{lem4.1} holds for $k=1.$

Let us now assume that \eqref{4.3}-\eqref{4.7} hold for
$\ell\leq j-1$ with $j\leq k.$ We are going to prove that
\eqref{4.3}-\eqref{4.5} also hold for $\ell=j,$  which will
imply immediately \eqref{4.6}-\eqref{4.7} for $\ell=j.$
In the following,
 for $n\leq m$ and for the $m$-length multi-index $(l_1,\cdots,l_m)$ such that
 $$
 (l_1,\cdots,l_m)\in L^n_m\eqdefa\bigl\{(l_1,\cdots,l_m)\,\big|\,
 l_1<\cdots<l_n,\,l_{n+1}<\cdots<l_{m},\, \{l_1,\cdots,l_m\}=\{1,\cdots,m\}\bigr\},
 $$
  we denote $\alpha^l(n)=(\alpha_{l_1}, \cdots, \alpha_{l_n})$
 and $\widehat\alpha^l(m-n)=(\alpha_{l_{n+1}}, \cdots, \alpha_{l_m})$.

For any positive integer $i\leq j-1$, a direct calculation gives
\begin{equation*}\begin{split}
\d_{\cW}^{\alpha(i+1)}\nabla\d_{\cW}^{\widehat\alpha(j-i-1)}
X-\nabla\d_{\cW}^{\alpha(j)} X&=\sum_{m=0}^i\d_{\cW}^{\alpha(m)}
[\pa_{X_{\alpha_{m+1}}},\nabla]
\d_{\cW}^{\widehat\alpha(j-m-1)}X\\
&=\sum_{m=0}^i\d_{\cW}^{\alpha(m)}\bigl(\nabla X_{\alpha_{m+1}}\cdot\nabla\d_{\cW}^{\widehat\alpha(j-m-1)}
X\bigr),
\end{split}\end{equation*}
where $[\cdot,\cdot]$ stands for the standard commutator.
Then the induction assumptions give
\begin{equation}\label{4.8}
\begin{split}
&\bigl\|\d_{\cW}^{\alpha(i+1)}\nabla\d_{\cW}^{\widehat\alpha(j-i-1)}
X-\nabla\d_{\cW}^{\alpha(j)} X\bigr\|_{L^\infty_t(W^{1,p})}\\
&\leq
C\sum_{m=0}^{i}\sum_{n=0}^{m}\sum_{(l_1,\cdots,l_m)\in L^n_m}
 \|\d_{\cW}^{\alpha^l(n)}\nabla X_{\alpha_{m+1}}\|_{L^\infty_t(W^{1,p})}
\|\pa_{\cW}^{\widehat\alpha^l(m-n)}\nabla\d_{\cW}^{\widehat\alpha(j-m-1)}
X\|_{L^\infty_t(W^{1,p})}
\\
&\leq C\sum_{m=0}^{i}\sum_{n=0}^{m}
 J_{n+1}^{n+1}  J_{j-n }^{j-n}
\leq CJ_j^{j+1}.
\end{split} \end{equation}

We follow the same lines as above to obtain
\begin{equation*}\begin{split}
\bigl\|\d_{\cW}^{\alpha(i+1)}&\nabla^2\d_{\cW}^{\widehat\alpha(j-i-1)} X-\nabla^2 \d_{\cW}^{\alpha(j)}
X\bigr\|_{L^\infty_t(L^p)}=\bigl\|\sum_{m=0}^i\d_{\cW}^{\alpha(m)}[\pa X_{\alpha_{m+1}},\nabla^2]\d_{\cW}^{\widehat\alpha(j-m-1)}
X\bigr\|_{L^\infty_t(W^{1,p})}\\
&\leq  C\sum_{m=0}^{i}\sum_{n=0}^{m} \sum_{(l_1,\cdots,l_m)\in L^n_m}
\Bigl( \|\d_{\cW}^{\alpha^l(n)}\nabla^2 X_{\alpha_{m+1}}\|_{L^\infty_t(L^{p})}
\|\pa_{\cW}^{\widehat\alpha^l(m-n)}\nabla\d_{\cW}^{\widehat\alpha(j-m-1)}X\|_{L^\infty_t(L^\infty)}
\\
&\qquad\qquad\qquad
+\|\d_{\cW}^{\alpha^l(n)}\nabla X_{\alpha_{m+1}}\|_{L^\infty_t(L^{\infty})}
\|\pa_{\cW}^{\widehat\alpha^l(m-n)}\nabla^2\d_{\cW}^{\widehat\alpha(j-m-1)}X\|_{L^\infty_t(L^p)}\Bigr)
\\
&\leq C \sum_{m=0}^{i}\sum_{n=0}^{m}
 J_{n+1}^{n+1}  J_{j-n }^{j-n}
\leq CJ_j^{j+1},\quad\mbox{and}
\end{split} \end{equation*}
$$
\bigl\|\d_{\cW}^{\alpha(i+1)}\pa_t\d_{\cW}^{\widehat\alpha(j-i-1)}
X-\pa_t\d_{\cW}^{\alpha(j)} X\bigr\|_{L^{s_j}_t(W^{1,p})}
\leq CJ_j^{j+1}.
$$

These two estimates together with \eqref{4.8} guarantee
 that \eqref{4.3} holds for $\ell=j$.

The same argument to achieve \eqref{4.3}  for $\ell=j$ yield \eqref{4.4} and \eqref{4.5} for $\ell=j$. We complete the proof of Lemma \ref{lem4.1} by the  induction argument.
\end{proof}

Next, we introduce the corresponding term to $F_\ell(v,\pi)$ given in Lemma $6.2$ of \cite{LZ}.
For any multi-index $\alpha(\ell)=(\alpha_1,\ldots,\alpha_{\ell})$,
we apply the operator $\pa_{\cX}^{\alpha(\ell-1)}$ to \eqref{4.13} for $\d_{X_{\alpha_\ell}}v$  to get
\begin{equation} \label{4.14}\begin{split}
&\d_t\d_{\cX}^{\alpha(\ell)} v + v\cdot\nabla \d_{\cX}^{\alpha(\ell)} v -  (1+a)\bigl(\Delta\d_{\cX}^{\alpha(\ell)} v -
\nabla \d_{\cX}^{\alpha(\ell)} \pi\bigr)\eqdefa F_\ell\bigl(v,\pi,\alpha(\ell)\bigr).
\end{split} \end{equation}
Here $F_\ell\bigl(v,\pi,\alpha(\ell)\bigr)$ is given by induction
\begin{equation*}\label{4.17}
F_\ell\bigl(v,\pi,\alpha(\ell)\bigr)=\pa_{X_{\alpha_1}}F_{\ell-1}\bigl(v,\pi,\widehat\alpha(\ell-1)\bigr)
+F_{1}\bigl(\pa_{\cX}^{\widehat\alpha(\ell-1)}v,\pa_{\cX}^{\widehat\alpha(\ell-1)}\pi,(\alpha_1)\bigr),
\end{equation*}
and hence by view of the definition of $F_1$ in \eqref{4.13}, and \eqref{bound:a}, we arrive at
\begin{equation*}\label{4.18}
\begin{split}
&F_\ell\bigl(v,\pi,\alpha(\ell)\bigr)
=\sum_{i=0}^{\ell-1}\pa_{\cX}^{\alpha(\ell-1-i)}
F_{1}\bigl(\pa_{\cX}^{\widehat\alpha(i)}v,\pa_{\cX}^{\widehat\alpha(i)}\pi,(\alpha_{\ell-i})\bigr)\\
&=(1+a)\sum_{i=0}^{\ell-1}\pa_{\cX}^{\alpha(\ell-1-i)}
\Bigl(  \nabla X_{\alpha_{\ell-i}}\cdot\nabla \pa_{\cX}^{\widehat\alpha(i)}\pi
- \Delta  X_{\alpha_{\ell-i}}\cdot\nabla \pa_{\cX}^{\widehat\alpha(i)}v
-2\nabla X_{\alpha_{\ell-i}}:\nabla^2 \pa_{\cX}^{\widehat\alpha(i)}v
 \Bigr).
\end{split}
\end{equation*}

Then exactly along the same line as the proofs of Lemma $6.2,~6.3$ and Proposition $6.1$ in \cite{LZ},
by using Lemma \ref{lem4.1} instead of Lemma $6.1$ and Remark $6.1$ used there,
we achieve a similar estimate as follows
\begin{align*}
&\|(\d_t \d_{\cX}^{\alpha(\ell)} v,\Delta\d_{\cX}^{\alpha(\ell)} v, \nabla\d_{\cX}^{\alpha(\ell)
}\pi)\|_{L^{r_{\ell }}_t(L^p)}
+\|\nabla\d_{\cX}^{\alpha(\ell)} v\|_{L^{r_{\ell }}_t(L^\infty) \cap
L^{s_{\ell }}_t(L^p)}+\|\d_{\cX}^{\alpha(\ell)} v\|_{L^{s_{\ell }}_t(L^\infty)\cap L^\infty_t(L^p)}\\
&\quad+ \|\d_t \d_{\cX}^{\alpha(\ell-1)}{\cX}\|_{L^{s_{\ell }}_t(W^{1,p})} +
\|\d_{\cX}^{\alpha(\ell-1)} {\cX}\|_{L^\infty_t(W^{2,p})}
\leq\cH_{\ell+1}(t),
\quad\forall \ell=2,\cdots,k,
\quad \forall t\in\R^+,
\end{align*}
which clearly implies \eqref{bound} for $\ell\geq2$ since the choice of $\alpha(\ell)$
is arbitrary.

\subsection{The proof of Proposition \ref{propapriori}}\label{sub2.4}

The existence of a weak solution $(\rho,v)$ to \eqref{1.1} is guaranteed by Proposition \ref{thmexistence}.
Moreover, the previous part of this section has shown that, this solution satisfies
the energy estimate \eqref{bound}, which obviously implies that $v\in L^1_\loc(\R^+, W^{2,p}(\R^3))$.
In particular, $v\in L^1_{\loc}(\R^+;\Lip)$,
thus the uniqueness of the solutions
can be proved by using Lagrangian formulation of \eqref{1.1} as  in \cite{DM12, HPZ}.
We omit the details here.

The fact that $v$ is in $L^1_{\loc}(\R^+;\Lip)$ also implies the existence and uniqueness
of the solution to \eqref{W} in $L^\infty_{\loc}(\R^+;W^{2,p})$, by using the classical
theory on transport equations. Then the energy estimate \eqref{bound} implies that,
this solution $X_i$ satisfies the conormal regularity in \eqref{claim}.
This completes the proof of Proposition \ref{propapriori}.

\smallskip
\noindent {\bf Acknowledgments.} This work was done when we were
visiting Morningside Center of the Academy of Mathematics and
Systems Science, CAS. We would like to thank Professor Ping Zhang
for introducing this interesting problem to us.
X. Liao was supported by SFB $1060$, University Bonn, during the last part of the work.

\medskip

\end{document}